\documentclass[a4paper,12pt]{amsart}

\usepackage{a4wide}
\usepackage{tikz}
\usepackage{url}

\newif\ifdetails
\detailstrue
\newcommand{\DETAIL}[1]%
{\ifdetails\par\fbox{\begin{minipage}{0.9\linewidth}\textit{Detail:}
			#1\end{minipage}}\par\fi}
\newcommand{\TODO}[1]%
{\ifdetails\par\fbox{\begin{minipage}{0.9\linewidth}\textbf{TODO:}
			#1\end{minipage}}\par\fi}

\usepackage{makecell}
\usepackage{relsize}

\newtheorem{lem}{Lemma}
\newtheorem{pro}{Proposition}
\newtheorem{thm}{Theorem}
\newtheorem{corollary}{Corollary}

\theoremstyle{remark}

\newtheorem{question}{Question}

\newtheorem{problem}{Problem}

\usepackage{caption}
\usepackage{subcaption}
\captionsetup[subfigure]{labelformat=empty}

\usepackage{float} 
\usepackage[pdftex,a4paper,citecolor = blue, colorlinks=true,urlcolor=blue]{hyperref}
\urlstyle{same}

\newcommand{\old}[1]{{}}

\DeclareMathOperator{\X}{\boldsymbol{x}}
\DeclareMathOperator{\Y}{\boldsymbol{y}}
\DeclareMathOperator{\1}{\boldsymbol{1}}

\usetikzlibrary{arrows}

\title{The level matrix of a tree and its spectrum}

\author{Audace A. V. Dossou-Olory}

\address{Audace A. V. Dossou-Olory \\ D{\'e}partement d'Hydrologie et Gestion des Ressources en Eau \\ Institut National de l'Eau \\ Centre d'Excellence d'Afrique pour l'Eau et l'Assainissement  \\ and  Institut de Math\'ematiques et de Sciences Physiques, Dangbo \\ Universit\'e d'Abomey-Calavi,  B\'enin \\  \url{https://orcid.org/0000-0003-2065-117X} }
\email{audace@aims.ac.za}

\subjclass[2020]{Primary 05C05, 05C50; Secondary 05C12, 05C35.}

\keywords{Rooted tree, level matrix, spectrum, spectral radius, level energy.}

\begin{document}

\begin{abstract}
Given a rooted tree $T$ with vertices $u_1,u_2,\ldots,u_n$, the level matrix $L(T)$ of $T$ is the $n \times n$ matrix for which the $(i,j)$-th entry is the absolute difference of the distances from the root to $v_i$ and $v_j$. This matrix was implicitly introduced by Balaji and Mahmoud~[{\em J. Appl. Prob.} 54 (2017) 701--709] as a way to capture the overall balance of a random class of rooted trees. In this paper, we present various bounds on the eigenvalues of $L(T)$ in terms of other tree parameters, and also determine the extremal structures among trees with a given order. Moreover, we establish bounds on the mutliplicity of any eigenvalue in the level spectrum and show that the bounds are best possible. Furthermore, we provide evidence that the level spectrum can characterise some trees. In particular, we provide an affirmative answer to a very recent conjecture on the level energy (sum of absolute values of eigenvalues).
\end{abstract}

\maketitle

\section{Introduction}

A common suitable means for storing graphs in computers, or applying mathematical methods to study their properties is the use of matrices to specify them. Spectral graph theory is concerned with the relationship between graph properties and the spectrum (set of all the eigenvalues) of matrices associated with graphs. It is known that no spectrum of a single matrix adequately describes all the facets of a graph. That is why there is generally a need to introduce new matrices. These include the adjacency matrix, the distance matrix, the signed and unsigned Laplacian, the maximum degree matrix~\cite{Adiga}, and the much recent notion of ancestral matrix~\cite{Eric}, hoping that combinations of these matrices can provide more structural information. The main aim is to reveal the properties of graphs that are characterised by the spectra of these matrices, and an extensive study has been conducted over the past five decades by several researchers.

\medskip
This paper focuses on the spectrum of the level matrix associated with a rooted tree. Given a rooted tree $T$ with vertices $u_1,u_2,\ldots,u_n$, the level matrix of $T$, denoted by $L(T)$, is the $n \times n$ matrix whose entry $l_{ij}$ in the $i$-th row and $j$-th column equals the absolute difference of the levels (distances from the root) of $u_i$ and $u_j$. We will study several properties of the eigenvalues of this matrix and place a particular attention to the spectral radius, which is the largest of the absolute values of the eigenvalues of a given matrix associated with a graph. We denote the level of a vertex $v$ in a rooted tree $T$ by $l(v)$. The absolute levels' difference $|l(v) -l(w)|$ is a way to measure how close vertices $v$ and $w$ are in a rooted tree, and it is worth pointing out that the level matrix is similar in nature to the usual distance matrix. In particular, the level matrix is symmetric and its diagonal entries are all equal to $0$.  So all eigenvalues of $L(T)$ are necessarily real numbers and they sum up to $0$, since the trace of $T$ is equal to $0$. For example, we depict in Figure~\ref{fig:standard-tree} a tree rooted at $v_1$ and its level matrix.
\begin{figure}[h]
  \begin{center}
\begin{tikzpicture}
  [scale=0.5,inner sep=0.8mm, 
   vertex/.style={circle,thick,draw}, 
   thickedge/.style={line width=2pt}] 
    \node[vertex] (a1) at (3,3) [fill=black] {};
    \node[vertex] (a2) at (1,2) [fill=white] {};
    \node[vertex] (a3) at (5,2) [fill=white] {};
    \node[vertex] (a4) at (0,1) [fill=white] {};
    \node[vertex] (a5) at (1,1) [fill=white] {};
    \node[vertex] (a6) at (5,1) [fill=white] {};
    \node[vertex] (a7) at (0,0) [fill=white] {};
    \node[vertex] (a8) at (4,0) [fill=white] {};
    \node[vertex] (a9) at (6,0) [fill=white] {}; 
   \draw[thick,black] (a1)--(a2)--(a4)--(a7);
   \draw[thick,black] (a1)--(a2)--(a5);
   \draw[thick,black] (a1)--(a3)--(a6)--(a8);
   \draw[thick,black] (a1)--(a3)--(a6)--(a9);   
   
     \node at (3,3.6) {$v_1$};
     \node at (1,2.6) {$v_6$};
     \node at (5,2.6) {$v_2$};
     \node at (-0.3,1.7) {$v_7$};
     \node at (1,0.3) {$v_5$};
     \node at (5.7,1) {$v_3$};
      \node at (0,-0.7) {$v_4$};
       \node at (4,-0.7) {$v_8$};
        \node at (6,-0.7) {$v_9$};
\end{tikzpicture} \qquad \qquad
$
  \begin{pmatrix}
		0 & 1 & 2 & 3 & 2 & 1 & 2 & 3 & 3\\
		1 & 0 & 1 & 2 & 1 & 0 & 1 & 2 & 2\\
		2 & 1 & 0 & 1 & 0 & 1 & 0 & 1 & 1\\
		3 & 2 & 1 & 0 & 1 & 2 & 1 & 0 & 0\\
		2 & 1 & 0 & 1 & 0 & 1 & 0 & 1 & 1\\
		1 & 0 & 1 & 2 & 1 & 0 & 1 & 2 & 2\\
		2 & 1 & 0 & 1 & 0 & 1 & 0 & 1 & 1\\
		3 & 2 & 1 & 0 & 1 & 2 & 1 & 0 & 0\\
		3 & 2 & 1 & 0 & 1 & 2 & 1 & 0 & 0
	\end{pmatrix}\,.$
\end{center}
\caption{A rooted tree and its level matrix (indexed by vertices $v_1,v_2,\ldots,v_9$ in this order).}\label{fig:standard-tree}
\end{figure}
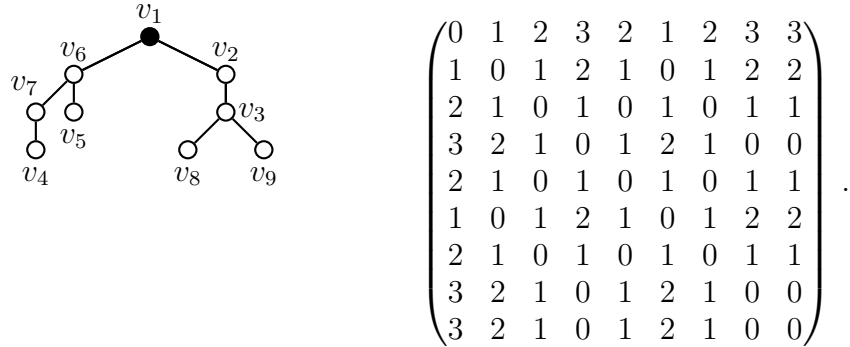

The level matrix has already found an application to applied probability as showed by Balaji and Mahmoud~\cite{Balaji}. The authors of~\cite{Balaji} employed the absolute levels' differences to define both what they called the \textit{level index} of a rooted tree and the \textit{Gini index} of a random class of rooted trees. The Gini index is an analogue of the standard Gini coefﬁcient commonly used in economics to study the distribution of income within a nation. This is also a motivation to study the properties of the level matrix of rooted trees. We will establish various bounds on the eigenvalues of the level matrix in terms of other tree parameters, and also describe the tree structures attaining the bounds. Moreover, we will show that the level spectrum can characterise the structure of some rooted trees. As a corollary to our results, we will determine the maximum level energy among trees with a given order, thus providing a positive answer to a very recent conjecture.

\section{Main results}

In analogy to other graph spectra, the level spectrum of a rooted tree $T$ is defined as the spectrum (set of all eigenvalues) of the level matrix $L(T)$.  In the example of Figure~\ref{fig:standard-tree}, the characteristic polynomial, $\det(xI_9 - L(T))$, is $x^9 - 80x^7 - 276x^6 - 216x^5$ and its zeros (the eigenvalues), rounded to few decimals, are given by:
$$ 10.415812724,~ 0,~ 0,~ 0,~ 0,~ 0,~ -1.1775860608,~ -2.6888645876,~ -6.5493620755\,.$$

We denote the maximum level of a vertex in a rooted tree $T$ by $l_{\max}(T)$. By $M^T$ we mean the transpose of a matrix $M$. A simple upper bound for an eigenvalue of a level matrix can be obtained using the maximum level of the vertices.

\begin{pro}
Let $T$ be a rooted tree with $n$ vertices. We have $$|\lambda| \leq (n-1) l_{max}(T)$$ for every eigenvalue $\lambda$ of $L(T)$. Equality holds for $n\leq 2$.
\end{pro}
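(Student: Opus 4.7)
The plan is to invoke a standard row-sum bound for symmetric real matrices: if $M=(m_{ij})$ is symmetric with real entries and $\lambda$ is an eigenvalue of $M$, then $|\lambda|\le \max_i \sum_j |m_{ij}|$. This follows by picking an eigenvector $\boldsymbol{v}=(v_1,\dots,v_n)$ for $\lambda$, choosing an index $k$ with $|v_k|=\max_i|v_i|>0$, and reading off the $k$-th coordinate of $M\boldsymbol{v}=\lambda\boldsymbol{v}$ to obtain
\[
|\lambda|\,|v_k|=\Bigl|\sum_j m_{kj}v_j\Bigr|\le \sum_j |m_{kj}|\,|v_k|.
\]

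Applying this to $M=L(T)$, I then bound the maximum absolute row sum. The diagonal entries of $L(T)$ vanish, so each row contains exactly $n-1$ off-diagonal entries. For any two vertices $u_i,u_j$ the level difference satisfies $0\le l(u_i),l(u_j)\le l_{\max}(T)$, hence
\[
|l_{ij}|=|l(u_i)-l(u_j)|\le l_{\max}(T).
\]
Summing $n-1$ such contributions in any row yields $\sum_j |l_{kj}|\le (n-1)\,l_{\max}(T)$, which combined with the general bound above gives the stated inequality.

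For the equality claim I just verify the two small cases. If $n=1$ then $L(T)$ is the $1\times 1$ zero matrix, whose only eigenvalue is $0$, matching $(n-1)l_{\max}(T)=0$. If $n=2$ then $l_{\max}(T)=1$ and
\[
L(T)=\begin{pmatrix}0&1\\ 1&0\end{pmatrix},
\]
whose eigenvalues $\pm 1$ both achieve $|\lambda|=1=(n-1)l_{\max}(T)$.

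There is essentially no obstacle: this is a direct application of the $\ell_\infty$-operator-norm bound, and the entry estimate $|l(u_i)-l(u_j)|\le l_{\max}(T)$ is immediate from the definition of the level. The only mild subtlety worth noting (though not required by the statement) is that for $n\ge 3$ the bound is strict, because a row achieving the row-sum bound would force all other vertices to have level $l_{\max}(T)$, contradicting the existence of the root's child chain of intermediate levels.
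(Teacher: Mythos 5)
Your argument is correct and follows essentially the same route as the paper: pick an eigenvector, read off the coordinate of maximal absolute value, and bound the row sum by $(n-1)\,l_{\max}(T)$ since the diagonal vanishes and every off-diagonal entry is at most $l_{\max}(T)$. Your explicit verification of the $n\le 2$ equality cases and the remark on strictness for $n\ge 3$ are fine additions but do not change the method.
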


\begin{proof}
Let $\X=(x_1~ x_2~ \ldots ~ x_n)^T$ be an eigenvector corresponding to an eigenvalue $\lambda$ of $L(T)$. By the definition of an eigenvalue, 
\begin{align*}
\lambda x_i =(l_{i1}~ l_{i2}~ \ldots ~ l_{in}) \X &=l_{i1}x_1+ l_{i2} x_2 +\cdots + l_{in}x_n\,,\\
|\lambda| \cdot | x_i| &\leq l_{i1}|x_1|+ l_{i2} |x_2| +\cdots + l_{in}|x_n|\\
&\leq (l_{i1} + l_{i2} +\cdots + l_{in} )\max_{1\leq j \leq n}|x_j|\,.
\end{align*}

Since $l_{ii}=0$ and $\max_{1\leq j \leq n}|x_j| \neq 0$ by definition of eigenvector, this implies that
\begin{align*}
|\lambda| \cdot | x_i| \leq (n-1)l_{\max}(T) \max_{1\leq j \leq n}|x_j|
\end{align*}
for all $1\leq i \leq n$. In particular, we obtain $|\lambda| \leq (n-1)l_{\max}(T)$.
\end{proof}

\medskip
A basic identity can be obtained for the eigenvalues of a level matrix as follows.

\begin{pro}\label{Prop:traceLcarre}
Let $T$ be a rooted tree with $n$ vertices and $\lambda_1,\lambda_2,\ldots,\lambda_n$ the eigenvalues of $L(T)$. We have
\begin{align*}
\lambda_1^2 + \lambda_2^2+ \cdots +\lambda_n^2= \sum_{i=1}^n (l_{i1}^2 + l_{i2}^2 + \cdots +l_{in}^2) =2 \sum_{1\leq i<j \leq n} l_{ij}^2\,.
\end{align*}
\end{pro}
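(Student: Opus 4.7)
My plan is to use the standard identity that, for any symmetric real matrix $M$, the sum of the squares of its eigenvalues equals the trace of $M^2$. Concretely, since $L(T)$ is symmetric and real, it is orthogonally diagonalisable, so $L(T)^2$ has eigenvalues $\lambda_1^2,\lambda_2^2,\ldots,\lambda_n^2$, which gives
\begin{equation*}
\lambda_1^2 + \lambda_2^2 + \cdots + \lambda_n^2 = \operatorname{tr}\bigl(L(T)^2\bigr).
\end{equation*}

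Next, I would compute $\operatorname{tr}(L(T)^2)$ directly from the definition of matrix multiplication. The $(i,i)$-entry of $L(T)^2$ is $\sum_{j=1}^n l_{ij} l_{ji}$, and using the symmetry $l_{ji}=l_{ij}$ (which is immediate from the definition of the level matrix, since $|l(u_i)-l(u_j)| = |l(u_j)-l(u_i)|$), this equals $\sum_{j=1}^n l_{ij}^2$. Summing over $i$ yields the first equality
\begin{equation*}
\operatorname{tr}\bigl(L(T)^2\bigr) = \sum_{i=1}^n \sum_{j=1}^n l_{ij}^2 = \sum_{i=1}^n \bigl(l_{i1}^2 + l_{i2}^2 + \cdots + l_{in}^2\bigr).
\end{equation*}

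For the second equality, I would split the double sum according to whether $i<j$, $i=j$, or $i>j$. The diagonal contribution vanishes because $l_{ii}=0$, and the contributions from $i<j$ and $i>j$ coincide by symmetry of $L(T)$, each equal to $\sum_{1\le i < j \le n} l_{ij}^2$. Combining them yields the claimed factor of $2$.

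There is no real obstacle here; the proof is essentially the definition of trace combined with symmetry and the vanishing of the diagonal. The only thing one must be careful about is invoking symmetry correctly when reducing $\sum_{j} l_{ij} l_{ji}$ to $\sum_j l_{ij}^2$, and noting that $L(T)$ has real eigenvalues precisely because it is a real symmetric matrix (already remarked in the introduction).
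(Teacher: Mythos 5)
Your proof is correct and follows essentially the same route as the paper: identify $\lambda_1^2+\cdots+\lambda_n^2$ with $\operatorname{tr}(L(T)^2)$, compute the diagonal entries of $L(T)^2$ using the symmetry $l_{ji}=l_{ij}$, and use $l_{ii}=0$ together with symmetry to obtain the factor $2$. The only difference is that you spell out the last equality, which the paper leaves implicit.
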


\begin{proof}
By assumption, all the eigenvalues of $L(T)^2$ are $\lambda_1^2,\lambda_2^2,\ldots,\lambda_n^2$. Thus, the quantity $\lambda_1^2 + \lambda_2^2+ \cdots +\lambda_n^2$ corresponds to the trace de $L(T)^2$. By definition, the $(i,i)$-th entry of $L(T)^2$ is given by
$l_{i1}l_{1i}+l_{i2}l_{2i}+\cdots + l_{in}l_{ni}=l_{i1}^2 +l_{i2}^2 +\cdots + l_{in}^2$, from which we obtain
\begin{align*}
\lambda_1^2 + \lambda_2^2+ \cdots +\lambda_n^2= \sum_{i=1}^n (l_{i1}^2 + l_{i2}^2 + \cdots +l_{in}^2)\,.
\end{align*}
\end{proof}

In order to study further properties of the eigenvalues of the level matrix, we need to recall some important tools for linear algebra. For a symmetric matrix $M$ and a nonzero vector $\X$, the Rayleigh quotient, denoted by $R(M,\X)$, is given by
$$ R(M,\X)=\frac{\X^T M \X}{\X^T \X}\,.$$ It is well-known that $ R(M,\X)=\lambda$ if $\X$ is an eigenvector for the eigenvalue $\lambda$. Moreover, the smallest and the greatest eigenvalues of $M$ can be computed by:
\begin{align*}
\inf_{||\X||=1}  R(M,\X)=\inf_{||\X||=1}  \X^T M \X \quad \text{and} \quad \sup_{||\X||=1}  R(M,\X)=\sup_{||\X||=1}  \X^T M \X\,,
\end{align*}
respectively (infirmum and supremum taken over all unit vectors)~\cite{Bapat} (see also~\cite[p. 456]{West}).

\medskip
\subsection{Level spectral radius}

We continue our considerations with the level spectral radius, i.e. the largest absolute value of the eigenvalues of the level matrix of a rooted tree $T$, which we denote by $\rho_L(T)$.

\medskip
Invoking Proposition~\ref{Prop:traceLcarre} and using the fact that $\rho_L(T)^2$ is also the spectral radius of $L^2(T)$, we obtain:

\begin{corollary}
Let $T$ be a rooted tree with $n$ vertices. We have
\begin{align*}
\rho_L(T)^2 \geq  \frac{1}{n}\sum_{i=1}^n (l_{i1}^2 + l_{i2}^2 + \cdots +l_{in}^2) = \frac{2}{n} \sum_{1\leq i < j \leq n} l_{ij}^2\,.
\end{align*}
Equality holds for $n\leq 2$.
\end{corollary}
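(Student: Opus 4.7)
The plan is a short and direct deduction from Proposition~\ref{Prop:traceLcarre}; I do not anticipate any real obstacle, but I would spell out the steps carefully. Since $L(T)$ is symmetric, the matrix $L(T)^2$ is symmetric and positive semidefinite, and its eigenvalues are precisely $\lambda_1^2,\ldots,\lambda_n^2$. In particular its spectral radius equals $\max_{1\leq i\leq n}\lambda_i^2$, which is the same as $\rho_L(T)^2$ (as already noted in the excerpt).

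Next I would use the elementary inequality ``maximum is at least the mean'' applied to the nonnegative reals $\lambda_1^2,\ldots,\lambda_n^2$, namely
$$\rho_L(T)^2 \;=\; \max_{1\leq i\leq n}\lambda_i^2 \;\geq\; \frac{1}{n}\sum_{i=1}^n \lambda_i^2.$$
Now I would invoke Proposition~\ref{Prop:traceLcarre} to rewrite the right-hand side as $\tfrac{1}{n}\sum_{i=1}^n(l_{i1}^2+\cdots+l_{in}^2)$, which in turn equals $\tfrac{2}{n}\sum_{1\leq i<j\leq n}l_{ij}^2$ by symmetry of $L(T)$ and the vanishing of its diagonal. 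Concatenating these identities with the previous inequality produces exactly the claimed bound.

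Finally, for the equality clause I would check the two small cases directly. If $n=1$ then the sole entry of $L(T)$ is $0$, so both sides are $0$. If $n=2$ then
$$L(T)=\begin{pmatrix}0 & l_{12}\\ l_{12} & 0\end{pmatrix},$$
whose eigenvalues are $\pm l_{12}$, hence $\rho_L(T)^2=l_{12}^2$, matching $\tfrac{2}{2}\,l_{12}^2=l_{12}^2$. The only place where the chain of reasoning loses information is the ``max $\geq$ average'' step, which is tight exactly when all the $\lambda_i^2$ are equal; for $n\leq 2$ this is automatic, which explains why equality is only asserted in that regime.
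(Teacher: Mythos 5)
Your proposal is correct and follows essentially the same route as the paper: the paper derives the corollary by noting that $\rho_L(T)^2$ is the spectral radius of $L(T)^2$ and invoking Proposition~\ref{Prop:traceLcarre}, which is precisely your ``max at least the mean'' argument applied to the trace identity. Your explicit verification of the equality cases $n\leq 2$ is a welcome detail the paper leaves implicit.
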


\medskip

\begin{thm}[Perron-Frobenius Theorem, \cite{Meyer}]
If a matrix $M$ is irreducible and has non-negative entries, then its spectral radius is an eigenvalue of multiplicity one. It corresponds to a unique positive unit eigenvector.
\end{thm}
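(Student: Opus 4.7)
The plan is to prove the theorem in three stages: (a) exhibit some non-negative eigenvector of $M$ whose associated eigenvalue equals the spectral radius $\rho(M)$; (b) strengthen non-negativity to strict positivity using irreducibility; and (c) deduce that the $\rho(M)$-eigenspace is one-dimensional. Stage (a) is where I expect the real difficulty to sit, and I would handle it by perturbation rather than by a direct variational argument.

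For (a), for every $\varepsilon > 0$ the matrix $M_\varepsilon := M + \varepsilon J$ (with $J$ the all-ones matrix) is entrywise strictly positive. Apply Brouwer's fixed-point theorem to the continuous self-map $\X \mapsto M_\varepsilon \X / (\1^T M_\varepsilon \X)$ of the standard simplex $\{\X \geq 0 : \1^T \X = 1\}$; a fixed point $\X_\varepsilon$ is then a unit eigenvector of $M_\varepsilon$ with eigenvalue $\rho(M_\varepsilon)$, and strict positivity of $M_\varepsilon$ forces $\X_\varepsilon > 0$. By compactness of the simplex and continuity of the spectral radius as a function of the matrix entries, I extract subsequential limits $\X_\varepsilon \to \X^*$ and $\rho(M_\varepsilon) \to \rho(M)$ as $\varepsilon \downarrow 0$, producing a non-negative unit eigenvector $\X^*$ of $M$ with eigenvalue $\rho(M)$.

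For (b), let $S = \{i : x^*_i = 0\}$ and suppose $S$ is a non-empty proper subset of $\{1,\ldots,n\}$. The rows of the identity $M\X^* = \rho(M)\X^*$ indexed by $S$ give $\sum_{j \notin S} m_{ij}\, x^*_j = 0$, and since $m_{ij} \geq 0$ together with $x^*_j > 0$ for $j \notin S$, this forces $m_{ij} = 0$ whenever $i \in S$ and $j \notin S$. After permuting indices, $M$ is then block upper-triangular with two non-trivial diagonal blocks, contradicting irreducibility. Hence $S = \emptyset$, so $\X^* > 0$. The uniqueness of the positive unit eigenvector in the statement follows once (c) is established.

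For (c), let $\Y$ be any eigenvector with $M\Y = \rho(M)\Y$. Splitting into real and imaginary parts reduces to the case of real $\Y$. After a suitable sign change, set $t = \min_i x^*_i / y_i$, where the minimum is taken over those indices for which $y_i > 0$; then $\X^* - t\Y$ is a non-negative $\rho(M)$-eigenvector with at least one vanishing coordinate, so by (b) it must be the zero vector. This yields geometric multiplicity one, and algebraic multiplicity one follows by the standard argument that applies (a)-(c) to the transpose $M^T$ (whose left Perron vector is also strictly positive) and uses $\Y^T \X^* > 0$ to rule out a generalised eigenvector. The principal obstacle is stage (a): one must ensure that the fixed point of $M_\varepsilon$ does not collapse in the limit, which is precisely why I normalise on the simplex rather than the unit sphere and why the compactness-plus-continuity step needs to be stated carefully.
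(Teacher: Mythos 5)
You are not competing with a proof in the paper here: the paper states the Perron--Frobenius theorem as a quoted classical result with a citation to Meyer and gives no proof, so any complete argument is necessarily your own route. What you propose is essentially the standard textbook proof by positive perturbation: Brouwer's fixed-point theorem on the simplex for $M_\varepsilon = M+\varepsilon J$, a compactness limit as $\varepsilon\downarrow 0$, the zero-pattern argument showing that a non-negative eigenvector of an irreducible non-negative matrix must be strictly positive, the minimal-ratio trick for geometric simplicity, and the strictly positive left eigenvector to exclude a generalised eigenvector. Stages (b) and (c) are correctly set up and complete as sketched (including the block-triangular contradiction with irreducibility and the pairing $w^T(M-\rho I)z=0$ versus $w^T x^*>0$).

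The one place where you assert more than you prove is in stage (a): the fixed point $x_\varepsilon$ only comes with $M_\varepsilon x_\varepsilon = \lambda_\varepsilon x_\varepsilon$ where $\lambda_\varepsilon = \1^T M_\varepsilon x_\varepsilon>0$; the identification $\lambda_\varepsilon = \rho(M_\varepsilon)$ is itself a small piece of Perron--Frobenius theory and needs an argument, e.g.\ the comparison lemma: if $M_\varepsilon z=\mu z$ with $z\neq 0$, set $c=\max_i |z_i|/(x_\varepsilon)_i$, attained at $i_0$; then $|\mu|\,|z_{i_0}|\le \sum_j (M_\varepsilon)_{i_0 j}|z_j|\le c\,(M_\varepsilon x_\varepsilon)_{i_0}=c\,\lambda_\varepsilon (x_\varepsilon)_{i_0}$, hence $|\mu|\le\lambda_\varepsilon$ and $\lambda_\varepsilon=\rho(M_\varepsilon)$. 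Without this (or the same comparison run at the limit, after (b) has given $x^*>0$), the limiting eigenvalue attached to $x^*$ need not be $\rho(M)$, and stage (a) does not deliver what (b) and (c) consume; note also that if you do run the comparison at the limit, the appeal to continuity of the spectral radius becomes unnecessary. With that lemma inserted, your proof is correct and self-contained, which is more than the paper offers for this statement.
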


The Perron-Frobenius Theorem is frequently used as a tool in computing the spectral radii of some of the matrices associated with graphs. In
particular, if a positive vector is an eigenvector then its corresponding positive eigenvalue is the spectral radius.

\begin{lem}
The level matrix $L(T)$ of a rooted tree $T$ is irreducible.
\end{lem}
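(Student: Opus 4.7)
The plan is to prove irreducibility via the standard characterisation: a nonnegative matrix $M$ of order $n\geq 2$ is irreducible iff the associated directed graph $G_M$, whose arc set is $\{(i,j) : M_{ij}\neq 0\}$, is strongly connected. Since $L(T)$ is symmetric, $G_L$ can be viewed as an undirected graph, and it suffices to check that $G_L$ is connected. By definition, $i$ and $j$ are adjacent in $G_L$ precisely when $l_{ij}=|l(u_i)-l(u_j)|\neq 0$, that is, when $u_i$ and $u_j$ lie on different levels of $T$.

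I would then argue connectivity directly. Assume $n\geq 2$ (the case $n=1$ is trivial, as the $1\times 1$ zero matrix is irreducible by convention). The root of $T$ sits at level $0$, and since $n\geq 2$ there exists at least one other vertex, necessarily at level at least $1$; hence $T$ has at least two distinct levels. Given any two vertices $u_i,u_j$, either $l(u_i)\neq l(u_j)$, in which case $i$ and $j$ are already adjacent in $G_L$, or $l(u_i)=l(u_j)$, in which case I pick any vertex $u_k$ whose level differs from $l(u_i)$; such a $u_k$ exists because $T$ uses at least two levels. Then $i-k-j$ is a path in $G_L$, so $G_L$ is connected.

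Combining these observations, $G_L$ is connected for every rooted tree on $n\geq 2$ vertices, which yields strong connectivity of the underlying digraph of $L(T)$ and therefore the irreducibility of $L(T)$. The argument is essentially a one-step reduction to the elementary fact that every rooted tree with more than one vertex has at least two occupied levels; there is no genuine obstacle, so the write-up can be kept very short.
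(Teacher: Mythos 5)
Your argument is correct: the strong-connectivity criterion for irreducibility is the standard one, and your observation that any two vertices on the same level can be joined through a vertex on a different level (which exists once $n\geq 2$) indeed shows the nonzero-pattern graph of $L(T)$ is connected, even of diameter at most $2$. The paper takes a slightly different, though ultimately equivalent, route: it argues by contradiction directly from the block-partition definition of reducibility, noting that a partition of the vertex set into nonempty parts $U$ and $V$ with all cross-entries zero would force every vertex of $U$ to have the same level as the root (placed in $V$), i.e.\ level $0$, contradicting the fact that the root is the unique vertex at level $0$. So the paper's proof leans on uniqueness of the level-$0$ vertex, while yours leans on the existence of at least two occupied levels plus a two-step path; both are one-line facts about rooted trees, and your version additionally records the diameter bound, whereas the paper's avoids invoking the digraph characterisation and works straight from the definition. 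Either write-up is acceptable and equally short.
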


\begin{proof}
Suppose that $L(T)$ is reducible. Then by the definition of reducible matrices, the vertex set of $T$ can be partitioned into
two non-empty subsets $U$ and $V$ such that $|l(u)-l(v)| = 0$ for all $u \in U$ and $v \in V$. In particular, $l(u)=l(r)=0$ for all $u \in U$, where $r\in V$ is the root of $T$. However, this is a contradiction to the fact that $r$ is the only vertex of $T$ whose level is $0$.
\end{proof}
 
Since $L(T)$ is an irreducible matrix, by the Perron-Frobenius Theorem, its level spectral radius is actually the largest positive eigenvalue and there exists an eigenvector with positive entries for $\rho_L(T)$. We call any such eigenvector a Perron vector of $L(T)$. 

\medskip
The level index of a rooted tree $T$, denoted by $LI(T)$, is defined as half the sum of all entries of $L(T)$~\cite{Balaji}. To simplify notation, let us set
$$L_i(T):=  \sum_{j=1}^n l_{ij}\,,~ \text{where}~ l_{ij}=|l_i-l_j|~ \text{and} ~ l_j=l(v_j)\,.$$

\begin{lem}\label{Lem:EqLB}
Let $T$ be a rooted tree with vertex set $\{v_1,v_2,\ldots,v_n\}$ such that $n>1$. Assume that
$l(v_1)\geq l(v_2)\geq \cdots \geq l(v_n)$. Then 
\begin{align*}
L_i - L_k=(n-2i)l_i  - 2 \sum_{j=i+1}^{k-1} l_j - (n-2k+2) l_k  
\end{align*}
holds for all $1\leq i < k \leq n$. In particular, 
\begin{align*}
L_i - L_k \geq (n-2k+2)(l_i -l_k)
\end{align*}
for all $1\leq i < k \leq n$, with equality if and only if $l_i=l_{i+1}=\cdots =l_{k-1}$.
\end{lem}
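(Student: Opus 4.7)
The core idea is that the hypothesis $l_1 \geq l_2 \geq \cdots \geq l_n$ lets us shed the absolute values in $L_i = \sum_{j=1}^n |l_i - l_j|$ according to the sign of $l_i - l_j$. First I would split the sum into $j < i$ (where $l_j \geq l_i$) and $j > i$ (where $l_j \leq l_i$), giving
\begin{align*}
L_i \;=\; \sum_{j=1}^{i-1} (l_j - l_i) \;+\; \sum_{j=i+1}^{n} (l_i - l_j) \;=\; (n - 2i + 1)\, l_i \;+\; \sum_{j=1}^{i-1} l_j \;-\; \sum_{j=i+1}^{n} l_j\,.
\end{align*}
The same identity, with $k$ in place of $i$, gives a closed form for $L_k$.

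Subtracting and telescoping the partial sums $\sum_{j=1}^{i-1} l_j - \sum_{j=1}^{k-1} l_j = -\sum_{j=i}^{k-1} l_j$ and $\sum_{j=k+1}^{n} l_j - \sum_{j=i+1}^{n} l_j = -\sum_{j=i+1}^{k} l_j$, and then peeling off the boundary terms $l_i$ and $l_k$ from these two sums so that the middle range $i+1 \leq j \leq k-1$ appears twice, yields exactly the claimed formula
\begin{align*}
L_i - L_k = (n-2i)\,l_i \;-\; 2 \sum_{j=i+1}^{k-1} l_j \;-\; (n-2k+2)\, l_k\,.
\end{align*}

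For the ``in particular'' part, I would subtract $(n-2k+2)(l_i - l_k)$ from both sides. After cancellation of the $(n-2k+2)\,l_k$ terms and of $(n-2k+2)\,l_i$, the inequality $L_i - L_k \geq (n-2k+2)(l_i - l_k)$ reduces, upon dividing by $2$, to
\begin{align*}
(k - i - 1)\, l_i \;\geq\; \sum_{j=i+1}^{k-1} l_j\,,
\end{align*}
which is immediate from $l_i \geq l_{i+1}, l_{i+2}, \ldots, l_{k-1}$; equality holds exactly when each $l_j$ in that range equals $l_i$, i.e.\ $l_i = l_{i+1} = \cdots = l_{k-1}$.

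I do not anticipate any real obstacle: the argument is bookkeeping with sums, and the only place one can err is in the index arithmetic when telescoping the two tails and peeling the endpoints $l_i,l_k$ out of the double-counted middle range. I would therefore take some care over those index bounds but otherwise expect the proof to read off in a few lines.
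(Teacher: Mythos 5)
Your proposal is correct and follows essentially the same route as the paper: drop the absolute values using the ordering to get $L_i=(n-2i+1)l_i+\sum_{j<i}l_j-\sum_{j>i}l_j$, manipulate the sums to obtain the stated identity, and then bound $\sum_{j=i+1}^{k-1}l_j\leq (k-i-1)l_i$ with the same equality analysis. The only difference is cosmetic bookkeeping (you subtract and telescope directly, the paper re-splits the two range sums before subtracting), so there is nothing substantive to add.
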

 
\begin{proof}
By definition, 
\begin{align*}
L_i =\sum_{j=1}^{i-1} (l_j-l_i) + \sum_{j=i+1}^{n} (l_i-l_j) = (n-2i+1)l_i+ \sum_{j=1}^{i-1} l_j - \sum_{j=i+1}^{n} l_j\,,
\end{align*}
which implies that
\begin{align*}
L_i &= (n-2k+1 +2(k-i) )l_i + \sum_{j=1}^{i-1} l_j - \sum_{j=i+1}^{k} l_j - \sum_{j=k+1}^{n} l_j\,,\\
L_k &= (n-2k+1 )l_k + \sum_{j=1}^{i-1} l_j + \sum_{j=i}^{k-1} l_j - \sum_{j=k+1}^{n} l_j\,,\\
L_i - L_k &=(n-2k+1)(l_i -l_k)+2(k-i)l_i - (l_i+l_k) - 2 \sum_{j=i+1}^{k-1} l_j
\end{align*}
for all $1\leq i < k \leq n$, where an empty sum is treated as $0$. Thus, the identity
\begin{align*}
L_i - L_k=(n-2i)l_i - (n-2k+2) l_k  - 2 \sum_{j=i+1}^{k-1} l_j
\end{align*}
follows. Furthermore, we have $\sum_{j=i+1}^{k-1} l_j \leq (k-i-1)l_i$ with equality if and only if $l_i=l_{i+1}=\cdots =l_{k-1}$. This implies that $L_i - L_k \geq (n-2k+2)(l_i -l_k)$.
\end{proof}

 \medskip

By $\1$ we mean a column vector whose entries are all equal to $1$. The lower bound in the following result relates the level index of a rooted tree to its level spectrum. 

\begin{thm}\label{Thm:FirstB}
Let $T$ be a $n$-vertex rooted tree. We have
\begin{align*}
 \frac{2}{n} LI(T)  \leq \rho_L(T) \leq \max_{1\leq i \leq n} L_i(T) \,.
\end{align*}
Equality holds in the lower bound if and only if $n\leq 2$.
\end{thm}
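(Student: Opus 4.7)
The plan is to prove the two inequalities separately, using the Rayleigh quotient together with the Perron--Frobenius structure already established in the excerpt.

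For the upper bound I would invoke a Perron vector $\X=(x_1,\ldots,x_n)^T>0$ for $\rho_L(T)$. Pick $k$ with $x_k=\max_i x_i>0$; then from $\rho_L(T)\,x_k=\sum_{j}l_{kj}x_j\leq x_k\sum_j l_{kj}=x_k L_k(T)$, dividing by $x_k$ yields $\rho_L(T)\leq L_k(T)\leq \max_{1\leq i\leq n}L_i(T)$. For the lower bound I would plug the all-ones vector $\1$ into the Rayleigh quotient and use that $\rho_L(T)$ is the largest eigenvalue of the symmetric matrix $L(T)$:
\begin{align*}
\rho_L(T)\;=\;\sup_{\X\ne 0}\frac{\X^T L(T)\X}{\X^T\X}\;\geq\;\frac{\1^T L(T)\1}{\1^T \1}\;=\;\frac{1}{n}\sum_{i,j} l_{ij}\;=\;\frac{2}{n}LI(T).
\end{align*}

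The equality characterisation is the only delicate point. Equality in the Rayleigh-quotient bound at the top eigenvalue holds iff $\1$ is an eigenvector of $L(T)$ for the eigenvalue $\rho_L(T)$. Since $L(T)$ is irreducible with non-negative entries (by the preceding lemma), Perron--Frobenius guarantees a unique positive Perron vector up to scaling, so $\1$ must itself be a Perron vector; equivalently, all row sums $L_i(T)$ must coincide. For $n=1$ this is vacuous and for $n=2$ both row sums equal $1$, so equality holds.

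It remains to rule out equality for $n\geq 3$. Ordering the vertices so that $l(v_1)\geq l(v_2)\geq\cdots \geq l(v_n)$, the root is $v_n$, hence $l_n=0$ and $l_{n-1}\geq 1$. Applying Lemma~\ref{Lem:EqLB} with $i=n-1$ and $k=n$ (the intermediate sum is empty, so the inequality is an identity) gives
\begin{align*}
L_n(T)-L_{n-1}(T)\;=\;(n-2)\,l_{n-1}\;\geq\;n-2\;>\;0,
\end{align*}
so the row sums are not all equal, and equality in the lower bound fails. The main obstacle, such as it is, is to remember that equality in the Rayleigh bound forces $\1$ to be an eigenvector and to then cash this in against the uniqueness of the Perron direction; everything else is routine calculation once the extremal-level vertices are isolated via Lemma~\ref{Lem:EqLB}.
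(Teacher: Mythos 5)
Your proof is correct and follows essentially the same route as the paper: the Rayleigh quotient at the all-ones vector for the lower bound, a maximal-entry Perron-vector argument for the upper bound, and equal row sums contradicted via Lemma~\ref{Lem:EqLB}. The only divergence is in the equality case, where you apply the lemma directly with $i=n-1$, $k=n$ (a non-root vertex of level at least $1$ against the root), obtaining $L_n-L_{n-1}=(n-2)l_{n-1}>0$ for $n\geq 3$ in one stroke; this is a slightly cleaner finish than the paper's case analysis over indices $k$ with $l_{k-1}\neq l_k$.
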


\begin{proof}
The statement of the theorem is trivial for $n\leq 2$. So we assume $n\geq 3$. By definition, the quantity $\1^T L(T) \1$ is the sum of all entries of $L(T)$. Using the Rayleigh quotient, we obtain
\begin{align}\label{Eq:Equality}
\rho_L(T) \geq R(L(T),\1 / \sqrt{n}) = 2LI(T) /n\,,
\end{align}
proving the lower bound. If equality holds, then by the Perron-Frobenius theorem, $\1 / \sqrt{n}$ is the unique positive unit eigenvector corresponding to $\rho_L(T)$. Hence $L(T) \1  =\rho_L(T) \1 $, i.e. the row sums of $L(T)$ are all equal to $\rho_L(T)$, which implies that $L_i=L_k$ for all $1\leq i< k \leq n$. Using Lemma~\ref{Lem:EqLB}, we obtain
\begin{align*}
L_{k-1} - L_k = (n-2k+2)(l_{k-1} -l_k)
\end{align*}
for all $2\leq k \leq n$. We choose (always possible) $k$ such that $l_{k-1} \neq l_k$. Also, recall that $l_{k-1} \geq l_{k}$. We consider two scenarios:
\begin{itemize}
\item $k-1 \neq n/2$. In this case, $L_{k-1} - L_k \neq 0$ which is a contradiction. 
\item $k-1 = n/2$. In this case, $n$ must be even and $k\geq 3$ is the sole index for which $l_{k-1}\neq l_k$. Thus, we have
$l_1=\cdots=l_{k-1} >l_k=\cdots=l_n=0$, which is absurd.
\end{itemize}
We conclude that equality never holds in~\eqref{Eq:Equality} for $n>2$.

\medskip 
For the upper bound, let $\Y=(y_1 ~y_2 ~\ldots~ y_n)^T$ be a Perron vector of $L(T)$. We have
\begin{align*}
\rho_L(T) y_i&=(l_{i1} ~ l_{i2} ~ \ldots ~l_{in}) \Y= l_{i1}y_1 + l_{i2}y_2 + \cdots +l_{in}y_n\\
& \leq (l_{i1} + l_{i2} + \cdots +l_{in}) \max_{1\leq j \leq n} y_j
\end{align*}
for all $1\leq i \leq n$. In particular, we get
$\rho_L(T) \leq l_{k1} + l_{k2} + \cdots +l_{kn} $
with $k$ being an index of the maximum value of the entries of $\Y$. Therefore, 
\begin{align*}
\rho_L(T) \leq \max_{1\leq i \leq n} \sum_{j=1}^n l_{ij}=\max_{1\leq i \leq n} L_i(T)
\end{align*}
follows immediately. 
\end{proof}

\medskip
Since $(\sum_{i=1}^n L_i/n)^2 \leq n \sum_{i=1}^n (L_i/n)^2$ by Cauchy-Schwartz inequality, we can get improvement for the lower bound stated in Theorem~\ref{Thm:FirstB}.

\begin{thm}
Let $T$ be an $n$-vertex rooted tree. Then it holds that
\begin{align*}
\rho_L(T) \geq \sqrt{\frac{1}{n}\sum_{j=1}^n L_j(T) ^2 }\,.
\end{align*}
\end{thm}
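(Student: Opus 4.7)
The plan is to exploit the fact that $L(T)$ is real symmetric, so $L(T)^2$ is positive semi-definite with eigenvalues $\lambda_1^2,\ldots,\lambda_n^2$, and its largest eigenvalue is precisely $\rho_L(T)^2$. By the Rayleigh quotient characterisation of the largest eigenvalue of a symmetric matrix, one has
\begin{align*}
\rho_L(T)^2 \;\geq\; \frac{\X^T L(T)^2 \X}{\X^T \X}
\end{align*}
for every non-zero vector $\X$. I would then make the strategically simple choice $\X = \1$, which is tailored to produce the row sums $L_k(T)$.

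Concretely, I would expand $\1^T L(T)^2 \1$ as the sum of all entries of $L(T)^2$ and regroup:
\begin{align*}
\1^T L(T)^2 \1 \;=\; \sum_{i,j} (L(T)^2)_{ij} \;=\; \sum_k \Bigl(\sum_i l_{ik}\Bigr)\Bigl(\sum_j l_{kj}\Bigr) \;=\; \sum_{k=1}^n L_k(T)^2,
\end{align*}
where the last equality uses the symmetry $l_{ik}=l_{ki}$ to identify each factor with the $k$-th row sum $L_k(T)$. Since $\1^T \1 = n$, this immediately gives $\rho_L(T)^2 \geq \frac{1}{n}\sum_{k=1}^n L_k(T)^2$, and taking square roots yields the desired inequality.

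The Cauchy--Schwartz remark preceding the theorem is not part of the proof itself; it serves only to confirm that this bound is genuinely sharper than the one in Theorem~\ref{Thm:FirstB}, since $\bigl(\tfrac{1}{n}\sum_i L_i\bigr)^2 \leq \tfrac{1}{n}\sum_i L_i^2$. No serious obstacle arises in the argument, as all ingredients are standard. The only subtlety worth flagging is that one must pass to $L(T)^2$ rather than to $L(T)$ itself: applying the Rayleigh quotient to $L(T)$ at $\1$ merely reproduces the lower bound of Theorem~\ref{Thm:FirstB}, whereas squaring first converts the $\ell^1$-type quantity $\sum L_i$ into the desired $\ell^2$-type quantity $\sum L_i^2$.
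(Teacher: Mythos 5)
Your proposal is correct and follows essentially the same route as the paper: both bound $\rho_L(T)^2$, the largest eigenvalue of $L(T)^2$, below by the Rayleigh quotient of $L(T)^2$ at $\1$, and both identify $\1^T L(T)^2 \1$ with $\sum_{k=1}^n L_k(T)^2$ (the paper writes this as $(\1^T L(T))(L(T)\1)$, you expand the entries of $L(T)^2$ directly — the same computation). The only cosmetic difference is that the paper phrases the first step via the Perron eigenvector of $L(T)$, whereas you invoke the variational characterisation of the top eigenvalue directly; both are valid.
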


\begin{proof}
Recall that $\rho_L(T)^2$ is the spectral radius of $L(T)^2$. Let $\X$ be the unit Perron vector of $L(T)$. Then $\X$ is an eigenvector for the eigenvalue $\rho_L(T)^2$ and using the Rayleigh quotient, we obtain
\begin{align*}
\rho_L(T)^2 = \X^T L(T)^2 \X \geq \1^T L(T)^2 \1 /n = (\1^T L(T)) ( L(T) \1) /n\,.
\end{align*}
Moreover, $\1^T L(T)$ equals the vector $(L_1 ~ L_2~ \ldots ~ L_n)$ and $ L(T) \1 =(L_1 ~ L_2~ \ldots ~ L_n)^T$. Thus, we have
$(\1^T L(T)) ( L(T) \1) = (L_1^2 + L_2^2+ \cdots + L_n^2)$, which implies the inequality
\begin{align*}
\rho_L(T)^2 \geq \frac{1}{n}(L_1^2 + L_2^2+ \cdots + L_n^2)
\end{align*}
as stated.
\end{proof}

\medskip
Let us mention that it is still possible to improve the lower bound stated in the previous theorem. Taking the unit vector
\begin{align*}
\frac{1}{\sqrt{L_1^2 + L_2^2+ \cdots + L_n^2}} (L_1 ~ L_2~ \ldots ~ L_n)
\end{align*}
instead of $\1 /\sqrt{n}$ in the proof of the previous result, we can establish the following.

\begin{thm}
Let $T$ be an $n$-vertex rooted tree. Then 
\begin{align*}
\rho_L(T) \geq \sqrt{\frac{\sum_{1\leq i \leq n} Q_i^2}{\sum_{1\leq j \leq n} L_j^2}} =\sqrt{\frac{\sum_{1\leq i \leq n} Q_i^2}{\sum_{1\leq i \leq n} Q_i}} 
\end{align*}
holds, where $Q_i=\sum_{1\leq j \leq n} l_{ij} L_j$.
\end{thm}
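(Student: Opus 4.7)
The plan is to mimic the proof of the previous theorem, replacing the test vector $\1/\sqrt{n}$ with the unit vector
$$\Y = \frac{1}{\sqrt{\sum_{j=1}^n L_j^2}}\, (L_1,\, L_2,\, \ldots,\, L_n)^T$$
suggested by the author's remark. Since $L(T)^2$ is symmetric and positive semi-definite with largest eigenvalue $\rho_L(T)^2$, the Rayleigh-quotient characterisation guarantees $\rho_L(T)^2 \geq \Y^T L(T)^2 \Y$, and it only remains to evaluate this right-hand side explicitly.

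For the evaluation, I would first observe that the $i$-th entry of the column vector $L(T)(L_1,\ldots,L_n)^T$ equals $\sum_{j=1}^n l_{ij}L_j = Q_i$, by the very definition of $Q_i$. Exploiting the symmetry $L(T)^T=L(T)$, this gives
$$(L_1,\, \ldots,\, L_n)\, L(T)^2\, (L_1,\, \ldots,\, L_n)^T \;=\; \bigl\|L(T)(L_1,\ldots,L_n)^T\bigr\|^2 \;=\; \sum_{i=1}^n Q_i^2.$$
Dividing through by the normalising factor $\sum_{j=1}^n L_j^2$ and taking square roots delivers the first form of the claimed lower bound.

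The second form then follows from the identity
$$\sum_{i=1}^n Q_i \;=\; \sum_{i=1}^n \sum_{j=1}^n l_{ij} L_j \;=\; \sum_{j=1}^n L_j \sum_{i=1}^n l_{ij} \;=\; \sum_{j=1}^n L_j^2,$$
where the last step uses the symmetry $l_{ij} = l_{ji}$, so that $\sum_i l_{ij} = L_j$ for each $j$. Substituting this identity into the denominator of the first bound produces the alternative expression $\sqrt{(\sum_i Q_i^2)/(\sum_i Q_i)}$.

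The argument is essentially mechanical once the correct test vector has been identified, so I do not foresee any genuine obstacle. The only subtle point is the swap-of-sums identity $\sum_i Q_i = \sum_j L_j^2$, which is what legitimises presenting the denominator in the two equivalent forms; care there is the main thing to get right.
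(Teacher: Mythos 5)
Your proposal is correct and follows exactly the route the paper intends: it plugs the unit vector $(L_1,\ldots,L_n)^T/\sqrt{\sum_j L_j^2}$ into the Rayleigh quotient for $L(T)^2$ as in the preceding theorem, and your identification of $L(T)(L_1,\ldots,L_n)^T$ with $(Q_1,\ldots,Q_n)^T$ together with the swap-of-sums identity $\sum_i Q_i=\sum_j L_j^2$ supplies precisely the details the paper omits.
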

The details are omitted.

\medskip
We define a rooted star to be a star whose root is the central vertex, and by rooted path we mean a path rooted at one of its endvertices.

\begin{thm}
Among all rooted trees with $n$ vertices, the rooted star $S_n$ uniquely minimises the level spectral radius. Moreover, $\rho_{L}(S_n)=\sqrt{n-1}$.
\end{thm}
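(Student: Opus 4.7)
The plan is to compute $\rho_L(S_n)$ exactly and then beat it via the Rayleigh quotient with a test vector shaped by the star's Perron vector.

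First, I would write the level matrix of $S_n$ (with the central root labelled $v_1$ and the $n-1$ leaves at level $1$) as the $2\times 2$ block matrix
\begin{equation*}
L(S_n) = \begin{pmatrix} 0 & \1^T \\ \1 & \mathbf{0} \end{pmatrix},
\end{equation*}
which has rank $2$, so the eigenvalue $0$ has multiplicity $n-2$. For a nonzero eigenvalue $\lambda$ with eigenvector $(x_1,\Y)^T$, the eigenvalue equation reads $\1^T \Y = \lambda x_1$ and $x_1 \1 = \lambda \Y$. Solving gives $\Y = (x_1/\lambda)\1$ and then $(n-1)x_1/\lambda = \lambda x_1$, so $\lambda^2 = n-1$. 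Hence the nonzero eigenvalues are $\pm\sqrt{n-1}$ and $\rho_L(S_n) = \sqrt{n-1}$.

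Next, for an arbitrary rooted tree $T$ of order $n\geq 2$ with root $r$, I would apply the Rayleigh principle to the test vector $\X\in\mathbb{R}^n$ that assigns the value $\sqrt{n-1}$ to $r$ and the value $1$ to every other vertex. Splitting the bilinear form according to whether or not $r$ is involved yields
\begin{equation*}
\X^T L(T)\X = 2\sqrt{n-1}\,S_1(T) + 2\,S_2(T), \qquad \X^T \X = 2(n-1),
\end{equation*}
where $S_1(T) := \sum_{v\neq r} l(v)$ and $S_2(T) := \sum_{\{u,w\}\subseteq V(T)\setminus\{r\}} |l(u)-l(w)|$. As noted earlier in the paper, $L(T)$ is symmetric, entrywise non-negative, and irreducible, so by the Perron--Frobenius theorem $\rho_L(T)$ coincides with the largest eigenvalue of $L(T)$. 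The Rayleigh characterisation therefore gives
\begin{equation*}
\rho_L(T) \;\geq\; \frac{\sqrt{n-1}\,S_1(T) + S_2(T)}{n-1}.
\end{equation*}

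Finally, I close the argument with uniqueness. Every non-root vertex has level at least $1$, so $S_1(T)\geq n-1$, with equality precisely when all non-root vertices sit at level $1$; this happens if and only if $T=S_n$. Since also $S_2(T)\geq 0$, the displayed bound already gives $\rho_L(T)\geq \sqrt{n-1}$, with equality only possible when $T=S_n$. Conversely, if $T\neq S_n$ then $T$ contains a vertex at level $\geq 2$, forcing $S_1(T)\geq n$, so the Rayleigh quotient strictly exceeds $\sqrt{n-1}$ and $\rho_L(T)>\sqrt{n-1}$. This proves that $S_n$ is the unique order-$n$ rooted tree achieving the minimum.

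I anticipate no serious obstacle: the only real idea is to recognise $(\sqrt{n-1},1,\ldots,1)^T$ as the Perron vector of $L(S_n)$ and use it as the test vector for general $T$. Once this vector is in hand, the Rayleigh computation and the strictness argument via $S_1(T)\geq n$ are immediate.
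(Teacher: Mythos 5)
Your proof is correct and follows essentially the same route as the paper: both arguments lower-bound $\rho_L(T)$ by the Rayleigh quotient of $L(T)$ taken at the Perron vector $(\sqrt{n-1},\,1,\ldots,1)^T$ of $L(S_n)$, and both identify $L(S_n)$ with the star's adjacency matrix to get $\rho_L(S_n)=\sqrt{n-1}$. The only difference in execution is that the paper compares $L(T)$ with $L(S_n)$ entrywise ($l_{ij}(T)\geq l_{ij}(S_n)$, strict somewhere unless $T=S_n$), whereas you evaluate the quotient in closed form, which makes the strictness for $T\neq S_n$ transparent and even gives the quantitative gap $\rho_L(T)\geq n\sqrt{n-1}/(n-1)$.
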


\begin{proof}
Let $T$ be an $n$-vertex rooted tree and $\Y=(y_1~ y_2 ~ \ldots ~ y_n)^T$ the unit Perron vector of $L(S_n)$. We have
\begin{align*}
R(L(T), \Y) - R(L(S_n), \Y)&=\Y^T L(T) \Y - \Y^T L(S_n) \Y\\
& = \sum_{1\leq j \leq n} \sum_{1\leq i \leq n} \big(l_{ij}(T)- l_{ij}(S_n)\big)y_i y_j\,.
\end{align*}
Note that $l_{ij}(T) \geq 0$ for all non-root vertices of $T$ and that $l_{ij}(T) \geq 1$ if one of these vertices (not both) is the root of $T$. Thus, we have $l_{ij}(T) \geq l_{ij}(S_n)$ for a suitable ordering of the vertices, for all $1\leq i,j \leq n$. Equality holds if and only if all non-root vertices of $T$ have level $1$, i.e. if $T$ is the rooted star $S_n$. Therefore, $R(L(T), \Y) - R(L(S_n), \Y)\geq 0$ and
\begin{align*}
\rho_{L}(T)\geq R(L(T), \Y) > R(L(S_n), \Y)=\rho_{L}(S_n)
\end{align*}
for $T\neq S_n$. It can be noted that $L(S_n)$ is the same as the adjacency matrix of $S_n$. Hence, we have $\rho_{L}(S_n)=\sqrt{n-1}$.
\end{proof}

\medskip
The matrix comparison argument used in the previous proof leads us to the following observation.

\begin{lem}\label{Lem:landd}
For a rooted tree $T$, we have $l_{ij}\leq d_{ij}$, where $d_{ij}$ is the $(i,j)$-th entry of the distance matrix of $T$. 
	\end{lem}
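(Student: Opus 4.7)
The plan is to exploit the fact that graph distance in any (connected) graph is a metric, so it satisfies the triangle inequality, and the level of a vertex is just its graph distance to the root.

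More precisely, let $r$ denote the root of $T$. By the definition of the level function, $l(v_i)=d(r,v_i)$ and $l(v_j)=d(r,v_j)$, where $d(\cdot,\cdot)$ is the usual graph distance in $T$. First I would note that since $d$ is a metric on $V(T)$, the reverse triangle inequality applied to the three vertices $r,v_i,v_j$ yields
\begin{align*}
|l(v_i)-l(v_j)|=|d(r,v_i)-d(r,v_j)|\leq d(v_i,v_j)=d_{ij},
\end{align*}
which is exactly $l_{ij}\le d_{ij}$.

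Alternatively, one can give a completely combinatorial proof that also pinpoints when equality is reached. In a tree, there is a unique path $P_{ij}$ from $v_i$ to $v_j$, and this path passes through the lowest common ancestor $w$ of $v_i$ and $v_j$. Writing $a=l(w)$, one has $a\le \min(l(v_i),l(v_j))$ because $w$ lies on the unique path from the root to each of $v_i,v_j$. The length of $P_{ij}$ decomposes as $d_{ij}=(l(v_i)-a)+(l(v_j)-a)=l(v_i)+l(v_j)-2a$, so
\begin{align*}
d_{ij}-l_{ij}=l(v_i)+l(v_j)-2a-|l(v_i)-l(v_j)|=2\bigl(\min(l(v_i),l(v_j))-a\bigr)\ge 0.
\end{align*}
The main (and only) obstacle is essentially notational, namely being careful that the metric argument applies to the root even though the root is not explicitly one of the indices $i,j$; both approaches handle this transparently.
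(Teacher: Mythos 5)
Both of your arguments are correct. Your second, combinatorial argument is essentially the paper's own proof: the paper takes $u$ to be ``the last vertex on the common subpath from the root to $v_i$ and $v_j$'' (your lowest common ancestor $w$), writes $l(v_i)-l(v_j)=d(v_i,u)-d(v_j,u)$, and compares with $d_{ij}=d(v_i,u)+d(u,v_j)$, distinguishing the case where one of $v_i,v_j$ is an ancestor of the other (equality) from the case where it is not (strict inequality). Your identity $d_{ij}-l_{ij}=2\bigl(\min(l(v_i),l(v_j))-l(w)\bigr)\ge 0$ packages the same computation and, importantly, also recovers the equality characterization ($d_{ij}=l_{ij}$ iff $w\in\{v_i,v_j\}$, i.e.\ the two vertices lie on a common root-to-leaf path), which the paper relies on later: the proof of Theorem~\ref{Thm:Pathmaxim} cites ``see the proof of Lemma~\ref{Lem:landd}'' precisely for the fact that $d_{ij}=l_{ij}$ for all $i,j$ forces $T$ to be the rooted path. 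Your first argument --- the reverse triangle inequality $|d(r,v_i)-d(r,v_j)|\le d(v_i,v_j)$ --- is shorter and more general (it works for the eccentricity-to-a-fixed-vertex function in any connected graph, with no tree structure needed), but by itself it does not record when equality occurs, so on its own it would not support the later use in Theorem~\ref{Thm:Pathmaxim}; keeping the LCA version alongside it, as you do, gives both the quick proof and the equality information.
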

	
	\begin{proof}
	Consider two vertices $v_i$ and $v_j$ of $T$. Let $u$ be the last vertex on the common subpath from the root (possibly, $u$ can coincide with the root of $T$) to $v_i$ and $v_j$. We have $$l(v_i)-l(v_j)= d(v_i,u) - d(v_j,u)\,.$$
	
	If $v_i$ and $v_j$ lie on the same path to the root of $T$, then one of these vertices, say $v_j$ coincides with $u$, implying that
	$d_{ij}=d(v_i,v_j)=d(v_i,u)$. Thus, $l(v_i)-l(v_j)= d_{ij}$ holds.
	
	If $v_i$ and $v_j$ do not lie on the same path to the root of $T$, then none of these vertices coincides with $u$, implying that
	$d_{ij}=d(v_i,v_j)=d(v_i,u)+d(u,v_j)$. Thus, we obtain $$l(v_i)-l(v_j)= d(v_i,u) - d(v_j,u) <d(v_i,u)+d(u,v_j)=d_{ij}\,.$$
	\end{proof}

\medskip
For a tree $T$, we denote by $D(T)$ its distance matrix.

\begin{thm}\label{Thm:Pathmaxim}
Among all rooted trees with $n$ vertices, the rooted path $P_n$ uniquely maximises the level spectral radius. Moreover,
$\rho_{L}(P_n)=1/ (\cosh(t)-1)$, where $t>0$ satisfies 
$$\tanh(t/2) \tanh(n\cdot t/2)=1/n\,.$$
\end{thm}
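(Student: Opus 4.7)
I plan to prove the statement in two stages. For the extremality of the rooted path, I would show $L(T) \leq L(P_n)$ entrywise (after suitable reordering) and then compare Rayleigh quotients via the Perron vector. The key observation is that in any rooted tree $T$, the ancestor chain of a deepest vertex forces every level between $0$ and $l_{\max}(T)$ to be occupied. Ordering the vertices of $T$ by non-decreasing level $0 = l_1 \leq l_2 \leq \cdots \leq l_n$, the sorted level sequence cannot skip values, so $l_{k+1} - l_k \leq 1$ for every $k$, and hence $l_j - l_i \leq j - i$ for $i < j$. Since $L(P_n)$ (rooted at an endvertex and indexed by level) has entries $|i-j|$, this gives $L(T) \leq L(P_n)$ entrywise. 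Taking the positive Perron vector $\X$ of $L(T)$, the chain
$$\rho_L(T) = \frac{\X^T L(T) \X}{\X^T \X} \leq \frac{\X^T L(P_n) \X}{\X^T \X} \leq \rho_L(P_n)$$
yields the bound, and uniqueness follows because if $T$ is not the rooted path then some off-diagonal entry strictly decreases, and positivity of $\X$ forces the first inequality to be strict.

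For the closed form, I would solve the eigenvalue equation $L(P_n) \X = \lambda \X$ directly by exploiting the structure of $L(P_n) = (|i-j|)_{i,j=1}^n$. A short direct calculation gives, for every vector $\X$,
$$\bigl(L(P_n) \X\bigr)_{i-1} - 2 \bigl(L(P_n) \X\bigr)_i + \bigl(L(P_n) \X\bigr)_{i+1} = 2 x_i \qquad (i = 2, \ldots, n-1),$$
so the eigenvalue equation is equivalent to the interior linear recurrence $x_{i-1} + x_{i+1} = (2 + 2/\lambda)\, x_i$ together with the two boundary equations. Setting $\cosh(t) = 1 + 1/\lambda$, and noting that $L(P_n)$ is invariant under the reflection $i \leftrightarrow n+1-i$ (so its Perron vector is symmetric), the unique symmetric solution of the recurrence is, up to positive scaling, $x_i = \cosh\bigl((i - (n+1)/2)\,t\bigr)$, which is automatically positive. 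It therefore remains to choose $t$ so that the first-row eigenvalue equation $\lambda x_1 = \sum_{j=2}^{n}(j-1)\,x_j$ holds.

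To evaluate this constraint I would use the two hyperbolic identities
$$\sum_{j=1}^n \cosh\bigl((j - \tfrac{n+1}{2})\,t\bigr) = \frac{\sinh(nt/2)}{\sinh(t/2)}, \qquad \sum_{j=1}^n \bigl(j - \tfrac{n+1}{2}\bigr)\cosh\bigl((j - \tfrac{n+1}{2})\,t\bigr) = 0,$$
the first a standard geometric-series telescoping and the second the symmetric cancellation of an odd function over a symmetric index set. Together these reduce $\sum_{j=2}^n (j-1)\,x_j = \sum_{j=1}^n j\,x_j - \sum_{j=1}^n x_j$ to $\tfrac{n-1}{2} \cdot \sinh(nt/2)/\sinh(t/2)$. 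Substituting $\lambda = 1/(2\sinh^2(t/2))$ and expanding $\cosh((n-1)t/2) = \cosh(nt/2)\cosh(t/2) - \sinh(nt/2)\sinh(t/2)$, the endpoint equation rearranges to $n\tanh(t/2)\tanh(nt/2) = 1$. Since $\tanh(t/2)\tanh(nt/2)$ is continuous and strictly increasing from $0$ to $1$ on $(0,\infty)$, this pins down a unique positive $t$; the corresponding $\lambda = 1/(\cosh t - 1)$ is positive with a positive eigenvector, so by Perron-Frobenius it equals $\rho_L(P_n)$.

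The main obstacle is the final hyperbolic simplification: reducing the endpoint condition to the symmetric form $\tanh(t/2)\tanh(nt/2) = 1/n$ while ensuring the argument handles both parities of $n$ uniformly (since $(n+1)/2$ is an integer for $n$ odd and a half-integer for $n$ even). The product-to-sum identity $2\sinh A \sinh B = \cosh(A+B) - \cosh(A-B)$ and the parity symmetry of $\cosh$ are the tools that keep the computation tractable.
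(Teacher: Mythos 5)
Your proposal is correct, but it takes a genuinely different route from the paper. The paper proves extremality by passing through the distance matrix: Lemma~\ref{Lem:landd} gives $l_{ij}\le d_{ij}$, the Rayleigh quotient at the Perron vector of $L(T)$ gives $\rho_L(T)\le R(D(T),\cdot)\le \rho_D(T)$, and the Ruzieh--Powers theorem (path maximises the distance spectral radius among trees) together with their closed-form expression for $\rho_D(P_n)=\rho_L(P_n)$ finishes the proof, strictness coming from the equality case of Lemma~\ref{Lem:landd}. You instead compare $L(T)$ directly with $L(P_n)$: since every level between $0$ and $l_{\max}$ is occupied, the levels sorted in nondecreasing order increase by at most $1$ per step, so $|l_i-l_j|\le |i-j|$, i.e.\ $L(T)\le L(P_n)$ entrywise; and if $T$ is not the rooted path two vertices share a level, so some off-diagonal entry drops strictly, which with the strictly positive Perron vector of $L(T)$ (irreducibility is available from the paper's lemma) yields strict inequality in the Rayleigh-quotient chain. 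This makes the extremal part self-contained, with no appeal to distance-spectral extremality. For the closed form you re-derive the Ruzieh--Powers formula from scratch via the second-difference identity for $(|i-j|)$, the symmetric $\cosh$ ansatz $x_i=\cosh((i-\tfrac{n+1}{2})t)$ with $\cosh t=1+1/\lambda$, and the first-row equation; I checked that the expansion $\cosh(\tfrac{(n-1)t}{2})=\cosh(\tfrac{nt}{2})\cosh(\tfrac{t}{2})-\sinh(\tfrac{nt}{2})\sinh(\tfrac{t}{2})$ does reduce the endpoint condition to $\tanh(t/2)\tanh(nt/2)=1/n$, the monotonicity of this product pins down a unique $t>0$, and Perron--Frobenius then identifies $\lambda=1/(\cosh t-1)$ as $\rho_L(P_n)$ since your eigenvector is positive, independently of the parity of $n$. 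The trade-off: the paper's argument is shorter and leans on the literature, while yours is fully self-contained and in passing reproves the formula for $\rho_D(P_n)$ (as $L(P_n)=D(P_n)$); the only step worth recording explicitly in a write-up is that the interior recurrence forces the residual $(L(P_n)x)_i-\lambda x_i$ to be affine in $i$, so the two boundary rows (the $n$-th being equivalent to the first by the reflection symmetry of your vector) indeed suffice to verify the full eigenvalue equation.
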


\begin{proof}
Let $T$ be an $n$-vertex rooted tree and $\Y=(y_1~ y_2 ~ \ldots ~ y_n)^T$ the unit Perron vector of the level matrix $L(T)$. We have
\begin{align*}
R(D(T), \Y) - R(L(T), \Y)& = \sum_{1\leq i \leq n} \sum_{1\leq j \leq n}(d_{ij} - l_{ij})y_i y_j \geq 0\,,
\end{align*}
where the inequality follows from Lemma~\ref{Lem:landd}. Consequently, we get
\begin{align*}
R(D(T), \Y) \geq R(L(T), \Y) = \rho_{L}(T)
\end{align*}
with equality if and only if $d_{ij} = l_{ij}$ for all $i,j$, in which case $T$ must coincide with the rooted path $P_n$ (see the proof of Lemma~\ref{Lem:landd}). In~\cite{Ruzieh} Ruzieh and Powers proved that the $n$-vertex path has the maximum distance spectral radius among all $n$-vertex trees. Using this result, we obtain
 \begin{align*}
\rho_{L}(T) \leq R(D(T), \Y) \leq \rho_{D}(T) \leq \rho_{D}(P_n)\,.
\end{align*}
The formula for $\rho_L(P_n)=\rho_D(P_n)$ can also be found in~\cite{Ruzieh}, thus completing the proof.
\end{proof}

\medskip
Another bound can be obtained for the level spectral using a quotient matrix of $L(T)$. This is shown in the next theorem.

\begin{thm}
Let $T$ be a rooted tree with $n>1$ vertices. We have
\begin{align*}
\rho_L(T) \geq \frac{1}{n-1} \max_{1\leq i \leq n} \Big(LI - L_i + \sqrt{(LI- L_i)^2 +(n-1)L_i^2}  \Big)\,.
\end{align*}
\end{thm}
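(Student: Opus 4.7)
The plan is to apply the Rayleigh quotient to a carefully chosen two-parameter family of unit vectors, thereby reducing the problem to computing the largest eigenvalue of a $2\times 2$ matrix; the bound will then follow by taking the maximum over the choice of distinguished vertex.

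Fix an index $1 \le i \le n$. I will test the Rayleigh quotient against vectors of the form $\X=(x_1,\ldots,x_n)^T$ that take the value $\alpha$ at position $i$ and the value $\beta$ at every other position, subject to the unit-norm constraint $\alpha^2 + (n-1)\beta^2 = 1$. Using the definitions of $L_i$ and $LI$, a direct expansion gives
\begin{align*}
\X^T L(T) \X &= \sum_{p,q} l_{pq} x_p x_q = 2\alpha\beta \sum_{q\neq i} l_{iq} + \beta^2 \sum_{p,q \neq i} l_{pq}\\
&= 2L_i \alpha\beta + 2(LI - L_i)\beta^2,
\end{align*}
since the total sum of the entries of $L(T)$ is $2LI$ and the two rows/columns meeting at $v_i$ contribute $2L_i$ in total.

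Next I would change variables, setting $\gamma = \beta\sqrt{n-1}$, so that the constraint becomes $\alpha^2+\gamma^2=1$ and the quadratic form rewrites as
\begin{align*}
\X^T L(T) \X = \frac{2L_i}{\sqrt{n-1}}\alpha\gamma + \frac{2(LI - L_i)}{n-1}\gamma^2 = (\alpha,\gamma)\, B\, (\alpha,\gamma)^T,
\end{align*}
where
\begin{align*}
B = \begin{pmatrix} 0 & L_i/\sqrt{n-1} \\ L_i/\sqrt{n-1} & 2(LI - L_i)/(n-1) \end{pmatrix}.
\end{align*}
By the Rayleigh quotient characterisation, $\rho_L(T) \ge \sup_{\|\X\|=1} \X^T L(T)\X$, which, restricted to this two-parameter family of unit vectors, equals the largest eigenvalue of the symmetric matrix $B$. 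Solving the characteristic equation $(n-1)\lambda^2 - 2(LI - L_i)\lambda - L_i^2 = 0$ yields
\begin{align*}
\lambda_{\max}(B) = \frac{(LI - L_i) + \sqrt{(LI - L_i)^2 + (n-1)L_i^2}}{n-1},
\end{align*}
which already matches the stated expression for the fixed choice of $i$. Since $i$ was arbitrary, taking the maximum over $1\le i \le n$ completes the argument.

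I do not expect a genuine obstacle here; the proof is essentially a quotient-matrix computation with a two-cell partition of the vertex set, and the only step that requires a bit of care is the correct normalisation from $(\alpha,\beta)$ to $(\alpha,\gamma)$ so that the supremum of the constrained quadratic form coincides precisely with a $2\times 2$ eigenvalue, whose closed form then produces the announced bound.
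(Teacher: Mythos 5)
Your proof is correct and is essentially the paper's argument: both use the two-cell partition $\{v_i\}\cup(V\setminus\{v_i\})$, and your symmetric matrix $B$ is just a similarity transform of the paper's quotient matrix $Q(T)$, so both yield the same largest root of $(n-1)X^2-2(LI-L_i)X-L_i^2=0$. The only difference is that the paper cites the interlacing theorem for quotient matrices, whereas you prove the needed one-sided inequality directly by restricting the Rayleigh quotient to vectors constant on the cells, which makes your version self-contained but not a genuinely different route.
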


\begin{proof}
Let $T$ be a rooted tree with vertex set $\{v_1,v_2,\ldots,v_n \}$. Fix $1\leq i \leq n$ and reorder the vertices of $T$ starting from $v_i$ to construct $L(T)$. Then $L(T)$ can be partitionned into blocks/submatrices $A_{1,1}, A_{1,2}, A_{2,1}, A_{2,2}$ as follows:
\begin{align*}
\begin{array}{c|c|ccc cccc|}
 & v_i & v_1  &\cdots &v_{i-1} &  v_{i+1}& v_{i+2}&\cdots &v_n \\  \hline  
v_i & A_{1,1}=0 &  & & &  A_{1,2} & & & \\ \hline
v_1 &   & & & & & & & \\ 
\vdots &   & & & & & & & \\ 
v_{i-1} &   & & &   &  & & &\\ 
 v_{i+1}  &  A_{2,1} & & & & A_{2,2} & & &  \\ 
v_{i+2} &   & & & & & & &  \\ 
 \vdots &    & & & & & & & \\  
 v_n &     & & & &  & & & \\ \hline
\end{array}
\end{align*} 
The average row sum of $A_{1,1}, A_{1,2}, A_{2,1}, A_{2,2}$ gives $$0,~ L_i, L_i/(n-1), 2(LI-L_i)/ (n-1)\,,$$ respectively. Then the quotient matrix corresponding to this partition of $L(T)$ is given by
\begin{align*}
Q(T):=\begin{pmatrix}
	0 & L_i\\
	 L_i/(n-1) & 2(LI-L_i)/ (n-1)		
	\end{pmatrix} \,. 
	\end{align*}
The two eigenvalues of $Q(T)$ are solutions of the quadratic equation in variable $X$:
\begin{align*}
X^2 -  2 X (LI-L_i)/ (n-1) - L_i^2/(n-1)=0\,,
\end{align*}
of which the largest one is given by
\begin{align*}
\frac{1}{n-1}\Big(LI - L_i + \sqrt{(LI- L_i)^2 +(n-1)L_i^2}  \Big)\,.
\end{align*}
It is known (see~\cite{Cauchy}) that the eigenvalues of $Q(T)$ interlace those of $L(T)$, and the result follows.
\end{proof}

\medskip
To simplifly notation, let us set
\begin{align*}
H(T):= \sum_{i=1}^n (l_{i1}^2 + l_{i2}^2 + \cdots +l_{in}^2)=2\sum_{1\leq i<j \leq n} l_{ij}^2
\end{align*}
for a rooted tree $T$ with level matrix $L(T)=(l_{ij})_{1\leq i,j \leq n}$. We can obtain further bounds for the eigenvalues of the level matrix as follows.

\begin{thm}
Let $T$ be a $n$-vertex rooted tree and $\lambda$ an eigenvalue of $L(T)$. Then it holds that
\begin{align*}
\lambda^2 \leq \frac{n-1}{n}H(T)\,.
\end{align*}
\end{thm}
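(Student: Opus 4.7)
The plan is to combine two facts already at hand: the trace identity $\mathrm{tr}(L(T)) = \lambda_1 + \lambda_2 + \cdots + \lambda_n = 0$ (since the diagonal entries of $L(T)$ vanish) and Proposition~\ref{Prop:traceLcarre}, which gives $\lambda_1^2 + \lambda_2^2 + \cdots + \lambda_n^2 = H(T)$. The idea is that isolating one eigenvalue $\lambda$ from the rest and invoking Cauchy--Schwarz turns these two identities into the desired inequality.

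Concretely, I would label the eigenvalues so that the distinguished one is $\lambda = \lambda_k$. From the trace identity,
\begin{align*}
\lambda = -\sum_{i \neq k} \lambda_i,
\end{align*}
and squaring this and applying the Cauchy--Schwarz inequality to the $(n-1)$-term sum yields
\begin{align*}
\lambda^2 = \Bigl(\sum_{i \neq k} \lambda_i\Bigr)^2 \leq (n-1) \sum_{i \neq k} \lambda_i^2.
\end{align*}
By Proposition~\ref{Prop:traceLcarre}, the right-hand sum equals $H(T) - \lambda^2$, so
\begin{align*}
\lambda^2 \leq (n-1)\bigl(H(T) - \lambda^2\bigr).
\end{align*}
Rearranging gives $n \lambda^2 \leq (n-1) H(T)$, which is exactly the claimed bound.

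There is essentially no obstacle here; the only thing worth double-checking is that the use of $H(T)$ in the hypothesis matches the trace of $L(T)^2$ used in Proposition~\ref{Prop:traceLcarre}, which is immediate from the definition $H(T) = \sum_{i=1}^n \sum_{j=1}^n l_{ij}^2 = \mathrm{tr}(L(T)^2)$. If desired, one could also remark on equality, which by the equality case of Cauchy--Schwarz occurs precisely when all eigenvalues other than $\lambda$ are equal (to $-\lambda/(n-1)$).
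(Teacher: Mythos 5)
Your proposal is correct and follows essentially the same route as the paper's proof: both isolate one eigenvalue via the trace identity $\sum_i \lambda_i = 0$, apply Cauchy--Schwarz to the remaining $n-1$ eigenvalues, and use Proposition~\ref{Prop:traceLcarre} to identify $\sum_i \lambda_i^2$ with $H(T)$, yielding $n\lambda^2 \leq (n-1)H(T)$. The only cosmetic difference is that the paper passes through absolute values before squaring, which changes nothing in substance.
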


\begin{proof}
Let $\lambda_1, \lambda_2, \ldots, \lambda_n$ be all the eigenvalues of $L(T)$. Since $\sum_{i=1}^n \lambda_i =0$, we have
\begin{align*}
|\lambda_j| = \Big| \sum_{i=1, i\neq j}^n \lambda_i  \Big| \leq \sum_{i=1, i\neq j}^n |\lambda_i |\,.
\end{align*}
The Cauchy-Schwartz inequality yields
\begin{align*}
\Big( \sum_{i=1, i\neq j}^n |\lambda_i| \Big)^2 \leq (n-1) \sum_{i=1, i\neq j}^n \lambda_i^2\,,
\end{align*}
which implies 
\begin{align*}
\lambda_j^2  \leq \Big( \sum_{i=1, i\neq j}^n |\lambda_i | \Big)^2 \leq (n-1) \sum_{i=1, i\neq j}^n \lambda_i^2 \,.
\end{align*}
Furthermore, using the identity $H(T)=\sum_{i=1}^n \lambda_i^2$ established in Proposition~\ref{Prop:traceLcarre}, we obtain
\begin{align*}
\lambda_j^2   \leq (n-1) \sum_{i=1, i\neq j}^n \lambda_i^2 =(n-1) (H(T)- \lambda_j^2)\,,
\end{align*}
or equivalently, $n \cdot \lambda_j^2 \leq (n-1) H(T)$. This completes the proof.
\end{proof}

\medskip
The previous bound on $\lambda$ can be improved further as shown in the next theorem.

\begin{thm}\label{Thm:Lupas}
Let $T$ be a rooted tree with $n>2$ vertices and $\lambda_1 \geq \lambda_2 \geq \cdots \geq \lambda_n$ all the eigenvalues of $L(T)$. Then the following inequalities hold:
\begin{align*}
\sqrt{\frac{H(T)}{n(n-1)}}\leq ~\lambda_1 \leq  \sqrt{\frac{n-1}{n}H(T)} \,, \quad 
- \sqrt{\frac{n-1}{n} H(T) } \leq ~\lambda_n  \leq - \sqrt{\frac{H(T)}{n(n-1)}} \,,
\end{align*}
and
\begin{align*}
 - \sqrt{\frac{(j-1) H(T) }{n(n-j+1)}} \leq ~ \lambda_j \leq  \sqrt{\frac{(n-j) H(T) }{j\cdot n}}
\end{align*}
for all $j\in \{2,\ldots,n-1 \}$.
\end{thm}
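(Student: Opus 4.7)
My plan is to combine Cauchy-Schwartz with the two trace identities $\sum_i \lambda_i = 0$ and $\sum_i \lambda_i^2 = H(T)$ from Proposition~\ref{Prop:traceLcarre}, treating the general bounds and the two endpoint bounds on $\lambda_1$ and $\lambda_n$ separately, since the generic scheme degenerates at $j=1$ and $j=n$.

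For the upper bound $\lambda_j \le \sqrt{(n-j)H(T)/(jn)}$ with $1 \le j \le n-1$, I would first note that it is trivial when $\lambda_j \le 0$, so I may assume $\lambda_j > 0$. Then $\lambda_i \ge \lambda_j > 0$ for $i \le j$, whence $\sum_{i=1}^j \lambda_i^2 \ge j\lambda_j^2$. Trace cancellation gives $\sum_{i > j} \lambda_i \le -j\lambda_j$, and Cauchy-Schwartz applied to $\lambda_{j+1},\ldots,\lambda_n$ yields $\sum_{i>j} \lambda_i^2 \ge j^2\lambda_j^2/(n-j)$. Adding and simplifying produces $H(T) \ge \lambda_j^2 \cdot jn/(n-j)$. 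The lower bound $\lambda_j \ge -\sqrt{(j-1)H(T)/(n(n-j+1))}$ for $2 \le j \le n$ is proved symmetrically: assuming $\lambda_j < 0$, use $|\lambda_i| \ge |\lambda_j|$ for $i \ge j$ to get $\sum_{i \ge j} \lambda_i^2 \ge (n-j+1)\lambda_j^2$, and apply Cauchy-Schwartz to $\lambda_1, \ldots, \lambda_{j-1}$. Specializing $j = 1$ in the first argument and $j = n$ in the second yields $\lambda_1 \le \sqrt{(n-1)H(T)/n}$ and $\lambda_n \ge -\sqrt{(n-1)H(T)/n}$, which are two of the four endpoint estimates.

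What remains are the ``small'' endpoint estimates $\lambda_1 \ge \sqrt{H(T)/(n(n-1))}$ and $\lambda_n \le -\sqrt{H(T)/(n(n-1))}$, for which the Cauchy-Schwartz scheme above yields only the trivial $\lambda_1 \ge 0 \ge \lambda_n$. The conceptual key here is to establish the product inequality $-\lambda_1\lambda_n \ge H(T)/n$, which I would derive from the elementary observation that $(\lambda_i - \lambda_n)(\lambda_1 - \lambda_i) \ge 0$ for every $i$ (since $\lambda_n \le \lambda_i \le \lambda_1$). Expanding this as $\lambda_i^2 \le \lambda_i(\lambda_1 + \lambda_n) - \lambda_1\lambda_n$ and summing over $i$ (the middle term vanishes because $\sum_i \lambda_i = 0$) gives $H(T) \le -n\lambda_1\lambda_n$. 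Combining with the already proven $\lambda_1 \le \sqrt{(n-1)H(T)/n}$, division yields $-\lambda_n \ge H(T)/(n\lambda_1) \ge \sqrt{H(T)/(n(n-1))}$; exchanging the roles of $\lambda_1$ and $-\lambda_n$ produces the matching lower bound on $\lambda_1$.

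The main obstacle is spotting the product inequality $\lambda_1\lambda_n \le -H(T)/n$, which serves as the conceptual bridge between the two endpoint estimates; once it is in hand, everything else reduces to routine Cauchy-Schwartz computations and the trace identities.
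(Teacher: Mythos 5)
Your argument is correct, but it takes a genuinely different route from the paper. The paper's proof is a two-line application of an external result: it feeds the characteristic polynomial $(x-\lambda_1)\cdots(x-\lambda_n)$ into Lupas's root-location inequalities (Lemma~\ref{Lem:Lupas}), with $y=0$ and $z=nH(T)$ supplied by the trace identities, and reads off all the intervals at once. You instead re-derive the Lupas bounds from scratch in the zero-mean case: the Cauchy--Schwartz splitting $\sum_{i\le j}\lambda_i^2\ge j\lambda_j^2$, $\sum_{i>j}\lambda_i^2\ge j^2\lambda_j^2/(n-j)$ (and its mirror image) correctly yields $\lambda_j^2\le \frac{(n-j)H(T)}{jn}$ and $\lambda_j^2\le\frac{(j-1)H(T)}{n(n-j+1)}$ in the respective sign regimes, covering the generic bounds together with the upper bound on $\lambda_1$ and the lower bound on $\lambda_n$; and your product inequality $H(T)\le -n\lambda_1\lambda_n$, obtained by summing $(\lambda_i-\lambda_n)(\lambda_1-\lambda_i)\ge 0$, is exactly the bridge needed for the two ``small'' endpoint estimates. (One sentence you should add: since $H(T)>0$ for $n\ge 2$, the inequality $-n\lambda_1\lambda_n\ge H(T)$ forces $\lambda_1>0>\lambda_n$, which legitimises the divisions by $\lambda_1$ and by $-\lambda_n$.) What each approach buys: the paper's proof is shorter and inherits the full strength of Lupas's theorem (arbitrary mean, hence reusable, e.g.\ for the level-energy bounds mentioned after the theorem), whereas yours is self-contained, elementary, and makes visible where equality could occur, at the cost of handling the endpoint cases separately.
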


\medskip
To prove Theorem~\ref{Thm:Lupas}, we will employ certain inequalities provided by A.~Lupas~\cite{Lupas}.

\begin{lem}[Theorem~2, \cite{Lupas}]\label{Lem:Lupas} 
Let $n>2$ be an integer and $P_n(x) \in \mathbb{R}[x]$ a monic polynomial of degree $n$ and only with real roots. If $x_1 \geq x_2 \geq \cdots \geq x_n$ are all the roots of $P_n(x)$, then the following relations hold:
\begin{align*}
x_1 \in \Big[y+\frac{1}{n}\sqrt{\frac{z}{n-1}},~ y +\frac{1}{n} \sqrt{(n-1)z} \Big]\,, \quad x_n  \in \Big[y-\frac{1}{n}\sqrt{(n-1)z},~ y-\frac{1}{n} \sqrt{\frac{z}{n-1}}\Big]\,,
\end{align*}
and
\begin{align*}
x_j \in \Big[y-\frac{1}{n}\sqrt{\frac{(j-1)z}{n-j+1}},~  y+\frac{1}{n}\sqrt{\frac{(n-j)z}{j}}\Big]
\end{align*}
for all $j\in \{2,\ldots,n-1 \}$, where
$$y=(x_1+x_2+\cdots +x_n)/n \quad \text{and} \quad z=n(x_1^2+x_2^2+\cdots +x_n^2) - (x_1+x_2+\cdots +x_n)^2\,.$$
\end{lem}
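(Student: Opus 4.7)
The plan is to recenter the roots by setting $s_i = x_i - y$, so that $\sum_{i=1}^n s_i = 0$ and $\sum_{i=1}^n s_i^2 = z/n$, while the ordering $s_1 \geq s_2 \geq \cdots \geq s_n$ is preserved. The problem then reduces to bounding $s_j$ for a zero-mean real sequence lying on a sphere of squared radius $z/n$, after which each claimed interval is obtained by translating back by $y$.

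For the generic upper bound on $s_j$ (and hence on $x_j$), I would use a two-block Cauchy--Schwarz partition. Split the indices into $A = \{1, \ldots, j\}$ and $B = \{j+1, \ldots, n\}$, set $a := \sum_{i \in A} s_i$, and note $\sum_{i \in B} s_i = -a$. Cauchy--Schwarz applied separately to each block gives $\sum_{i \in A} s_i^2 \geq a^2/j$ and $\sum_{i \in B} s_i^2 \geq a^2/(n-j)$; summing and using $\sum_i s_i^2 = z/n$ yields $a^2 \leq z\,j(n-j)/n^2$. Since $s_i \geq s_j$ for $i \in A$, we have $j s_j \leq a$, whence $s_j \leq a/j \leq \sqrt{(n-j)z/j}/n$ (the case $s_j<0$ makes the inequality trivial). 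The generic lower bound $s_j \geq -\sqrt{(j-1)z/(n-j+1)}/n$ for $j \geq 2$ follows from the mirror partition $A' = \{1, \ldots, j-1\}$ versus $B' = \{j, \ldots, n\}$, exploiting $s_i \leq s_j$ for $i \in B'$.

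The main obstacle will be the sharp refined bounds at the extreme indices $j = 1$ (lower) and $j = n$ (upper), because the generic partitions degenerate to the vacuous $s_1 \geq 0$ and $s_n \leq 0$ there. For these I would invoke the chain-of-differences inequality $0 \leq s_1 - s_i \leq s_1 - s_n$, which implies $(s_1 - s_i)^2 \leq (s_1 - s_i)(s_1 - s_n)$ for every $i$. Summing over $i$ and using the identities $\sum_i (s_1 - s_i) = n s_1$ and $\sum_i (s_1 - s_i)^2 = n s_1^2 + z/n$ gives $n s_1^2 + z/n \leq (s_1 - s_n)(n s_1)$, which simplifies to $s_1(-s_n) \geq z/n^2$. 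Combined with the bound $-s_n \leq \sqrt{(n-1)z}/n$ coming from the generic partition at $j = n-1$, this yields
\[
s_1 \;\geq\; \frac{z/n^2}{\sqrt{(n-1)z}/n} \;=\; \frac{1}{n}\sqrt{\frac{z}{n-1}},
\]
the desired refined lower bound on $x_1$. The sharpened upper bound on $x_n$ follows by applying the very same argument to the reversed centered sequence $-s_n \geq -s_{n-1} \geq \cdots \geq -s_1$, which enjoys the identical moment conditions; this closes all the remaining cases of the lemma.
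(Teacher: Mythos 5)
Your proof is correct, and it is worth noting that the paper itself offers no proof of this statement at all: Lemma~\ref{Lem:Lupas} is simply quoted from Lupas's 1977 paper as an external result. So there is nothing in the paper to compare against; what you have produced is a complete, self-contained elementary argument. The centering to $s_i = x_i - y$ with $\sum_i s_i = 0$ and $\sum_i s_i^2 = z/n$ is the right normalization, the two-block Cauchy--Schwarz partition correctly delivers $a^2 \leq z\,j(n-j)/n^2$ and hence the generic bounds (including the upper bound for $x_1$ at $j=1$ and the lower bound for $x_n$ at $j=n$), and your handling of the sign cases ($s_j<0$ making the upper bound trivial, $s_j>0$ making the lower bound trivial) is what makes the Cauchy--Schwarz step legitimately convert into a bound on $s_j$ itself. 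The chain-of-differences argument for the remaining sharp endpoints is also sound: $(s_1-s_i)^2 \leq (s_1-s_i)(s_1-s_n)$ summed over $i$ gives $s_1(-s_n) \geq z/n^2$, which combined with $-s_n \leq \sqrt{(n-1)z}/n$ yields $s_1 \geq \frac{1}{n}\sqrt{z/(n-1)}$, and the reflection $s_i \mapsto -s_{n+1-i}$ handles $x_n$. Two cosmetic points only: the bound $-s_n \leq \sqrt{(n-1)z}/n$ comes from the generic \emph{lower}-bound partition at index $j=n$ (i.e., $A'=\{1,\dots,n-1\}$, $B'=\{n\}$), not ``$j=n-1$'' as you label it; and you should dispose of the degenerate case $z=0$ (all roots equal, all intervals collapse to $\{y\}$) before dividing by $-s_n$. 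Neither affects the validity of the argument.
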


\medskip
\begin{proof}[Proof of Theorem~\ref{Thm:Lupas}]
We apply Lemma~\ref{Lem:Lupas} to the polynomial $$P_n(x)=(x-\lambda_1) (x-\lambda_2) \ldots (x- \lambda_n)\,.$$ Since $\lambda +\lambda_2+\cdots +\lambda_n =0$ and $\lambda_1^2+\lambda_2^2+\cdots +\lambda_n^2 =H(T)$ by Proposition~\ref{Prop:traceLcarre}, we obtain
\begin{align*}
\lambda_1 \in \Big[\frac{1}{n}\sqrt{\frac{n H(T) }{n-1}},~ \frac{1}{n} \sqrt{(n-1)n H(T) } \Big]\,,\quad \lambda_n \in \Big[-\frac{1}{n}\sqrt{(n-1)n H(T)},~ -\frac{1}{n} \sqrt{\frac{n H(T)}{n-1}}\Big]\,,
\end{align*}
and
\begin{align*}
\lambda_j  \in \Big[-\frac{1}{n}\sqrt{\frac{(j-1)n H(T) }{n-j+1}},~  \frac{1}{n}\sqrt{\frac{(n-j)n H(T) }{j}}\Big]
\end{align*}
for all $j\in \{2,\ldots,n-1 \}$. This completes the proof.
\end{proof}

\medskip
In particular, Theorem~\ref{Thm:Lupas} shows that the interval
\begin{align}\label{Equ:Lupas}
\Big[ - \sqrt{\frac{n-1}{n} H(T)},~ \sqrt{\frac{n-1}{n} H(T)}  \Big]
\end{align}
contains the spectrum of $L(T)$ for any rooted tree $T$ with $n$ vertices.

\medskip
\subsection{Level energy and eigenvalue's multiplicity}\label{Subs:multipli}

The energy of a graph, defined by Ivan Gutman~\cite{Gutman} in 1978, is a much studied quantity in the mathematical literature. In a manner fully analogous to other matrices associated with a graph, the level energy $E_L(T)$ of a rooted tree $T$ is the sum of the absolute values of the eigenvalues of $L(T)$~\cite{Audace}. We can apply Lemma~\ref{Lem:Lupas} to the polynomial
$$(x-|\lambda_1|) (x-|\lambda_2|)\ldots (x-|\lambda_n|)$$ to establish bounds for the level energy $E_L(T)$ similar to those in Theorem~\ref{Thm:Lupas}.

Since the trace of the level matrix of $T$ equals $0$, the level energy $E_L(T)$ is precisely twice the sum of those positive eigenvalues. In particular, we have $E_L(T) \geq 2 \rho_L(T)$ and any lower bound for the level spectral radius implies a lower bound for the level energy. Moreover, $E_L(T) = 2 \rho_L(T)$ if and only if $L(T)$ has precisely one positive eigenvalue. Thus, a natural question is to characterise rooted trees with only one positive level eigenvalue. In the sequel, we first manage some particular cases of this problem.

\medskip
The Cauchy interlacing theorem is quite often a key result in the approach of studying the spectrum of matrices associated with graphs. It captures the relationship between the spectrum of a symmetric matrix and that of its principal submatrices, see~\cite{Bapat2,Cauchy,Parlett}.

\begin{thm}\label{Thm:Mult0}
Let $T$ be a rooted tree with $n >1$ vertices and maximum level $l_{\max}$. Then the multiplicity of $0$ as an eigenvalue for $L(T)$ is at most $n-1-l_{\max}$. If equality holds, then $L(T)$ must have only one positive eigenvalue. In particular, equality holds for all rooted paths and all rooted versions of stars.
\end{thm}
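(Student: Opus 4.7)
\medskip
\noindent\textbf{Proof plan.}
My plan is to apply the Cauchy interlacing theorem to a cleverly chosen principal submatrix of $L(T)$. The idea is that, although $L(T)$ may be very degenerate, there is always an embedded ``path-like'' submatrix whose spectrum is well understood, and interlacing then forces a lower bound on the number of nonzero eigenvalues of $L(T)$.

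\smallskip
\noindent\emph{Step 1 (constructing the submatrix).} Since every level $k \in \{0,1,\ldots,l_{\max}\}$ is realised by at least one vertex of $T$, I would pick, for each such $k$, one vertex $w_k$ with $l(w_k)=k$. The principal submatrix $M$ of $L(T)$ indexed by $\{w_0,w_1,\ldots,w_{l_{\max}}\}$ has entries $|l(w_i)-l(w_j)|=|i-j|$, so $M$ is exactly the level matrix of the rooted path $P_{l_{\max}+1}$. Since in a rooted path the level of a vertex coincides with its distance from the root-endvertex, $M$ is also the distance matrix of $P_{l_{\max}+1}$.

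\smallskip
\noindent\emph{Step 2 (inertia of $M$).} I would then invoke the classical result (Graham--Lov\'asz, and also contained in the explicit spectral computation of Ruzieh--Powers already cited in the proof of Theorem~\ref{Thm:Pathmaxim}) that the distance matrix of a tree has inertia $(1,0,n-1)$. Applied to $P_{l_{\max}+1}$, this gives that $M$ has exactly one positive eigenvalue, $l_{\max}$ negative eigenvalues, and no zero eigenvalue.

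\smallskip
\noindent\emph{Step 3 (interlacing).} Let $\lambda_1\geq \lambda_2\geq \cdots\geq \lambda_n$ be the eigenvalues of $L(T)$ and $\mu_1\geq \cdots\geq \mu_{l_{\max}+1}$ those of $M$. The Cauchy interlacing theorem yields
\begin{align*}
\lambda_i \,\geq\, \mu_i \,\geq\, \lambda_{n-l_{\max}-1+i}\qquad (1\le i\le l_{\max}+1).
\end{align*}
Because $\mu_1>0$ and $\mu_2,\ldots,\mu_{l_{\max}+1}<0$, this forces $\lambda_1>0$ as well as $\lambda_{n-l_{\max}+1},\ldots,\lambda_n<0$. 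Hence $L(T)$ has at least one positive and at least $l_{\max}$ negative eigenvalues, so at most $n-1-l_{\max}$ of its eigenvalues can vanish. In the case of equality, exactly $l_{\max}+1$ eigenvalues are nonzero, and since the interlacing already accounts for one positive and $l_{\max}$ negative of them, there can be only one positive eigenvalue of $L(T)$.

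\smallskip
\noindent\emph{Step 4 (extremal cases).} For the rooted path $P_n$ one has $l_{\max}=n-1$, so the bound reads ``multiplicity of $0$ is at most $0$''; this is attained because $L(P_n)=D(P_n)$ is nonsingular by Graham--Lov\'asz. For the rooted star $S_n$ one has $l_{\max}=1$, and it was already observed that $L(S_n)$ coincides with the adjacency matrix of $S_n$, whose spectrum is $\{\sqrt{n-1},0^{(n-2)},-\sqrt{n-1}\}$; the nullity $n-2=n-1-l_{\max}$ matches the bound.

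\smallskip
\noindent\emph{Expected obstacle.} The only non-routine ingredient is pinpointing the inertia of the path distance matrix; I would handle this by citing the result of Graham--Lov\'asz (or equivalently the explicit spectrum of $D(P_m)$ from Ruzieh--Powers). Everything else is a direct application of interlacing to the specific principal submatrix produced in Step~1.
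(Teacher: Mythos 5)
Your argument is correct and follows essentially the same route as the paper: both proofs extract a principal submatrix of $L(T)$ that is the distance matrix of a path on $l_{\max}+1$ vertices (the paper takes the root-to-deepest-vertex path, you take one representative per level, which produces the identical matrix), use its inertia (one positive, $l_{\max}$ negative, no zero eigenvalue), and apply Cauchy interlacing to conclude that $L(T)$ has at least $l_{\max}$ negative and at least one positive eigenvalue, so the nullity is at most $n-1-l_{\max}$, with the equality case forcing exactly one positive eigenvalue. The one gap is in Step 4: the statement claims equality for \emph{all} rooted versions of stars, so besides the center-rooted star $S_n$ you must also handle the star rooted at a leaf, where $l_{\max}=2$ and the claimed nullity is $n-3$. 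The paper settles this by citing \cite{Audace} for the fact that this tree has exactly three nonzero level eigenvalues (the zeros of $x^3+(-5n+9)x-4n+8$); alternatively, you can note that the rows of its level matrix indexed by the $n-2$ leaves at level $2$ are identical, so the rank is at most $3$ and the nullity is at least $n-3$, which together with your upper bound gives equality.
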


\begin{proof}
Let $T$ be a tree with $n >1$ vertices and with root $r$. Denote by
$\lambda_1 \geq \lambda_2 \geq \cdots \geq \lambda_n$ all the eigenvalues of $L(T)$. Consider a subtree rooted at $r$ of $T$ which is a path $P_m$. We choose $m=1+l_{\max}\leq n$, which is always possible. Let $M$ be the submatrix of $L(T)$ obtained by deleting all rows and columns not indexed by the vertices of $P_m$. Note that the deletion of these vertices does not affect the levels of the other vertices in $T$. This means that $M$ is the level matrix of $P_m$. Let the eigenvalues of $M$ be $\mu_1 \geq \mu_2 \geq \cdots \geq \mu_m$. Applying the Cauchy's interlacing theorem~\cite{Bapat2, Cauchy}, we get
\begin{align*}
\lambda_{n-m+2} \leq \mu_2\,,~ \lambda_{n-m+3} \leq \mu_3\,,~ \ldots,~ ~\lambda_{n-1} \leq \mu_{m-1},~ \lambda_n \leq \mu_m\,.
\end{align*}
Since $L(P_m)=D(P_m)$ has precisely $m-1$ negative eigenvalues~\cite{Bapat2, Ruzieh}, it follows that
\begin{align*}
\lambda_{n-m+2}, \lambda_{n-m+3}, \ldots, \lambda_{n-1}, \lambda_n < 0\,.
\end{align*}
Thus, the number of nonnegative eigenvalues of $L(T)$ is at most $n-m+1$. In partciular, the multiplicity of $0$ as an eigenvalue for $L(T)$ is at most $n-m=n-1-l_{\max}$. If equality holds, then $L(T)$ must have only one positive eigenvalue.

The level matrix of the rooted star $S_n$ is the same as the adjacency matrix of $S_n$. Then it has only two nonzero eigenvalues given by $\pm \sqrt{n-1}$. Since the maximum level of $S_n$ is $1$, we see that the multiplicity of $0$ as an eigenvalue for $L(S_n)$ is precisely $n-2=n-1-l_{\max}$.

Let $R_n$ stand for the $n$-vertex star rooted at one of its non-central vertices. It was shown in~\cite{Audace} that $L(R_n)$ has precisely three nonzero eigenvalues, namely the zeros of $x^3+(-5n+9)x-4n+8$. Since the maximum level of $R_n$ is $2$, we see that the multiplicity of $0$ as an eigenvalue for $L(R_n)$ is precisely $n-3=n-1-l_{\max}$.

Recall that rooted paths are the only trees for which $0$ is not a level eigenvalue (see also~\cite{Audace}). This fact is further confirmed by this theorem since $l_{\max}=n-1$ for the rooted path $P_n$. 
\end{proof}

\medskip
Theorem~\ref{Thm:charcONE} below shows that besides rooted paths, the level spectrum can also specify the topological structure of other trees.

\begin{thm}\label{Thm:charcONE}
Let $n>2$ and $B$ be a $n\times n$ level matrix. Then $0$ is an eigenvalue of multiplicity $n-2$ for $B$ if and only if $B$ is associated with the rooted star $S_n$.
\end{thm}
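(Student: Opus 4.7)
The plan is to leverage Theorem~\ref{Thm:Mult0} directly, which already packages most of the work. The statement to be proved is a biconditional, so I will handle the two directions separately, with the nontrivial direction being almost immediate from the inequality $\text{mult}(0) \leq n-1-l_{\max}$.

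For the forward direction (the ``only if'' part), I would assume $B = L(T)$ for some rooted tree $T$ on $n$ vertices and that $0$ is an eigenvalue of multiplicity exactly $n-2$. By Theorem~\ref{Thm:Mult0}, this forces
\begin{align*}
n-2 \;\leq\; n-1-l_{\max}(T)\,,
\end{align*}
so $l_{\max}(T) \leq 1$. Since $n > 2 > 1$, the tree $T$ has at least one non-root vertex, hence $l_{\max}(T) \geq 1$. Therefore $l_{\max}(T) = 1$, meaning every non-root vertex of $T$ is adjacent to the root. This is exactly the structural description of the rooted star $S_n$, so $T = S_n$ and consequently $B = L(S_n)$.

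For the reverse direction, I would simply recall the computation already carried out in the proof of Theorem~\ref{Thm:Mult0}: the level matrix $L(S_n)$ coincides with the adjacency matrix of $S_n$, whose spectrum consists of $\sqrt{n-1}$, $-\sqrt{n-1}$, and $0$ with multiplicity $n-2$. Thus $0$ has multiplicity exactly $n-2$ when $B = L(S_n)$.

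There is no real obstacle here; the only subtlety is making sure the hypothesis $n>2$ is used correctly, namely to guarantee $l_{\max}(T) \geq 1$ (so that $l_{\max}(T)=1$ rather than $0$). The short length of the argument reflects the fact that Theorem~\ref{Thm:Mult0} has already absorbed the Cauchy interlacing work; all that remains is to squeeze the extremal case out of the bound.
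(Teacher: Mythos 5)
Your proof is correct, and it differs from the paper's in a small but genuine way. The paper does not invoke Theorem~\ref{Thm:Mult0} when proving the converse direction; instead it re-runs the Cauchy interlacing argument from scratch: assuming $T \neq S_n$, it selects a root path $P_m \subseteq T$ with $m \geq 3$ and derives the contradiction $\mu_{m-1} \geq \lambda_{n-1} = 0$ against the fact that $L(P_m) = D(P_m)$ has $m-1$ negative eigenvalues. You instead read the conclusion directly off the bound $\text{mul}_T(0) \leq n-1-l_{\max}(T)$ already established in Theorem~\ref{Thm:Mult0}: multiplicity $n-2$ forces $l_{\max}(T) \leq 1$, hence $l_{\max}(T) = 1$ and $T = S_n$. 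Both arguments ultimately rest on the same engine (interlacing against a root-to-deepest-vertex path), since the paper's contradiction is exactly the $m \geq 3$ instance of the proof of Theorem~\ref{Thm:Mult0}; your version is shorter and makes transparent that Theorem~\ref{Thm:charcONE} is just the extremal case of that multiplicity bound, at the mild cost of being less self-contained. The reverse direction is identical to the paper's. One minor quibble: the hypothesis $n>2$ is not really what guarantees $l_{\max}(T) \geq 1$ (any $n \geq 2$ does that); it merely excludes the degenerate case $n=2$, where $S_2$ coincides with the rooted path, so this does not affect the validity of your argument.
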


\begin{proof}
We already know that $L(S_n)$ has only two nonzero eigenvalues. 

Conversely, let $T$ be a tree with $n >2$ vertices and with root $r$ such that $0$ is an eigenvalue of multiplicity $n-2$ for $L(T)$. Suppose (to the contrary) that $T$ is not the rooted star $S_n$. Denote by
$\lambda_1 > \lambda_2 = \cdots = \lambda_{n-1} > \lambda_n$ all the eigenvalues of $L(T)$. Consider a subtree rooted at $r$ of $T$ which is a path $P_m$. We choose $m\geq 3$, which is always possible since $T \neq S_n$. Let $M$ be the submatrix of $L(T)$ obtained by deleting all rows and columns not indexed by the vertices of $P_m$. Then $M$ is the level matrix of $P_m$ and its eigenvalues can be denoted by $\mu_1 \geq \mu_2 \geq \cdots \geq \mu_m$. By the Cauchy's interlacing theorem~\cite{Bapat2, Cauchy}, we have
\begin{align*}
\lambda_{m-1} \geq \mu_{m-1} \geq \lambda_{n-1} \,.
\end{align*}
Since $\lambda_{n-1}=0$ by assumption, we obtain $\mu_{m-1} \geq 0$. This is a contradiction for $L(P_m)=D(P_m)$ since $m\geq 3$. Hence $T$ must be the rooted star $S_n$.
\end{proof}

\medskip
We finish this paper by providing a result on the multiplicity of an arbitrary (not only $0$) eigenvalue in the spectrum of the level matrix of a tree. For a rooted tree $T$ and a leaf (pendant vertex) $v$ of $T$, let us denote by $\text{mul}_T(\lambda)$ the multiplicity of $\lambda$ as an eigenvalue of $L(T)$ and by $T-v$ the subtree obtained from $T$ by deleting $v$.

\begin{thm}\label{Thm:DiffMult}
Let $T$ be a rooted tree and $\lambda$ any eigenvalue of $L(T)$. Then
$$|\text{mul}_T(\lambda) - \text{mul}_{T-v}(\lambda) | \leq 1$$ holds for all leaves $v$ of $T$.
\end{thm}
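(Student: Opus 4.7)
The plan is to reduce this to Cauchy's interlacing theorem, which has already been the workhorse for the multiplicity results (Theorems~\ref{Thm:Mult0} and~\ref{Thm:charcONE}) in this section. The first step is the structural observation that underlies everything: since $v$ is a leaf, it has no descendants, so removing $v$ from $T$ preserves the root and preserves the level of every remaining vertex. Consequently, the level matrix $L(T-v)$ is exactly the principal submatrix of $L(T)$ obtained by deleting the row and column indexed by $v$. This is the same kind of reduction used in the proofs earlier in the section, and it is precisely the setting in which Cauchy's interlacing theorem applies.

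Next I would write down the interlacing inequalities explicitly. Let the eigenvalues of $L(T)$ be $\lambda_1 \geq \lambda_2 \geq \cdots \geq \lambda_n$ and those of $L(T-v)$ be $\mu_1 \geq \mu_2 \geq \cdots \geq \mu_{n-1}$. Cauchy's theorem gives
\begin{align*}
\lambda_i \geq \mu_i \geq \lambda_{i+1} \qquad \text{for all } 1 \leq i \leq n-1.
\end{align*}
From here the two required inequalities are essentially forced. If $\text{mul}_T(\lambda) = k$, write the repeated block as $\lambda_i = \lambda_{i+1} = \cdots = \lambda_{i+k-1} = \lambda$; then for each $j \in \{i, i+1, \ldots, i+k-2\}$, the interlacing squeeze $\lambda = \lambda_{j+1} \leq \mu_j \leq \lambda_j = \lambda$ forces $\mu_j = \lambda$, so $\text{mul}_{T-v}(\lambda) \geq k-1$. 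The symmetric argument, starting from a repeated block $\mu_i = \cdots = \mu_{i+k-1} = \lambda$ in $L(T-v)$ and using $\mu_j \leq \lambda_j \leq \mu_{j-1}$, shows $\text{mul}_T(\lambda) \geq \text{mul}_{T-v}(\lambda) - 1$. Combining these two inequalities yields the claimed bound $|\text{mul}_T(\lambda) - \text{mul}_{T-v}(\lambda)| \leq 1$.

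There is no real obstacle here; the only subtlety worth spelling out is the first step (that leaf deletion does not disturb any other level and hence $L(T-v)$ is genuinely a principal submatrix of $L(T)$), because this is what makes Cauchy's interlacing applicable in the first place. Once that observation is in place, the rest of the argument is a standard squeeze on eigenvalues.
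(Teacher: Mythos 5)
Your proposal is correct and takes essentially the same route as the paper: both proofs rest on the observation that deleting a leaf leaves all other levels unchanged, so $L(T-v)$ is a principal submatrix of $L(T)$, and then apply Cauchy interlacing. The only cosmetic difference is that the paper runs a four-case analysis to pin down the exact value of $\text{mul}_{T-v}(\lambda)$ (it is $m-1$, $m$, or $m+1$), whereas you extract the two one-sided bounds directly from the squeeze, which is a slightly cleaner way to reach the same conclusion.
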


\begin{proof}
The statement is trivial for $n=2$. So we assume $n>2$. Let $T$ be a rooted tree with $n$ vertices and $v$ a leaf of $T$. Denote by $\lambda_1 > \lambda_2 \geq \cdots \geq \lambda_n$ and $\mu_1 > \mu_2 \geq \cdots \geq \mu_{n-1}$ all the level eigenvalues of $T$ and $T-v$, respectively. Assume that $\lambda=\lambda_{k+1}$ has multiplicity $m$, i.e.
$$\lambda_1 \geq \cdots \geq \lambda_{k} > \lambda_{k+1}=\cdots = \lambda_{k+m}> \lambda_{k+m+1} \geq \cdots \geq \lambda_n\,.$$ By the Cauchy interlacing theorem~\cite{Bapat2, Cauchy}, we have
\begin{align*}
\lambda_{k} \geq \mu_k \geq  \lambda_{k+1}=\mu_{k+1}=\cdots =\mu_{k+m-1}= \lambda_{k+m}\geq \mu_{k+m}\geq \lambda_{k+m+1}\,.
\end{align*}
We observe four possible scenarios:
\begin{itemize}
\item If $\mu_k =  \lambda_{k+1}$ and $\lambda_{k+m}=\mu_{k+m}$, then $\lambda_{k} > \mu_k$ and $ \mu_{k+m} > \lambda_{k+m+1}$. In this case, we get $$\mu_{k-1}\geq \lambda_{k} > \mu_k=\cdots =\mu_{k+m} > \lambda_{k+m+1}\geq \mu_{k+m+1}\,.$$ Thus $\lambda=\lambda_{k+1}=\mu_k $ is an eigenvalue of multiplicity $m+1$ for $L(T-v)$. Hence $\text{mul}_T(\lambda) -  \text{mul}_{T-v}(\lambda)=-1$.
\item If $\mu_k >  \lambda_{k+1}$ and $\lambda_{k+m} > \mu_{k+m}$, then 
$$\mu_k >  \lambda_{k+1}= \mu_{k+1}=\cdots =\mu_{k+m-1} = \lambda_{k+m} >\mu_{k+m}\,,$$ which implies that $\lambda=\lambda_{k+1}=\mu_{k+1} $ is an eigenvalue of multiplicity $m-1$ for $L(T-v)$. Hence $\text{mul}_T(\lambda) -  \text{mul}_{T-v}(\lambda)= 1$.
\item If $\mu_k =  \lambda_{k+1}$ and $\lambda_{k+m} > \mu_{k+m}$, then $\lambda_{k} > \mu_k$ and we obtain
$$\mu_{k-1}\geq \lambda_{k} > \mu_k=\cdots =\mu_{k+m-1} > \mu_{k+m}\,.$$ Hence $\lambda=\lambda_{k+1}=\mu_{k} $ is an eigenvalue of multiplicity $m$ for $L(T-v)$, i.e. $\text{mul}_T(\lambda) -  \text{mul}_{T-v}(\lambda)= 0$.
\item If $\mu_k >  \lambda_{k+1}$ and $\lambda_{k+m} = \mu_{k+m}$, then $ \mu_{k+m} > \lambda_{k+m+1}$ and we get
$$\mu_k > \lambda_{k+1}=\mu_{k+1}=\cdots = \mu_{k+m} > \lambda_{k+m+1} \geq \mu_{k+m+1}\,.$$ Thus $\lambda=\lambda_{k+1}=\mu_{k+1} $ is an eigenvalue of multiplicity $m$ for $L(T-v)$, i.e. $\text{mul}_T(\lambda) -  \text{mul}_{T-v}(\lambda)= 0$.
\end{itemize}
This completes the proof.
\end{proof}

\medskip

For the special case of level eigenvalue $0$, Theorem~\ref{Thm:DiffMult} can be strengthened as follows.

\begin{thm}\label{Thm:zeroMult}
Let $T$ be a rooted tree with $n >2$ vertices and maximum level $l_{\max}$. Then the multiplicity of $0$ as an eigenvalue for $L(T)$ is precisely $n-1-l_{\max}$. Moroever, $L(T)$ always admits only one positive eigenvalue. Furthemore, 
$$\text{mul}_T(0) - \text{mul}_{T-v}(0) \in \{0,1\}$$ holds for all leaves $v$ of $T$.
\end{thm}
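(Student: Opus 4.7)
The strategy is to upgrade the inequality $\text{mul}_T(0) \leq n-1-l_{\max}$ established in Theorem~\ref{Thm:Mult0} to an equality by exhibiting that many explicit vectors in the kernel of $L(T)$; the statement on positive eigenvalues and the final multiplicity bound will then follow by bookkeeping.

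The key observation is that same-level vertices automatically yield null vectors: if $l(u_i)=l(u_j)$, then $l_{ik}=|l_i-l_k|=|l_j-l_k|=l_{jk}$ for every index $k$, so columns $i$ and $j$ of $L(T)$ coincide and $e_i - e_j$ lies in the kernel. Let $n_\ell$ denote the number of vertices of $T$ at level $\ell$, and for each $\ell \in \{0,1,\ldots,l_{\max}\}$ fix a reference vertex $w_\ell$ at that level. Then the family
\[
\bigl\{e_v - e_{w_{l(v)}} : v \text{ is a vertex of } T,\ v \ne w_{l(v)}\bigr\}
\]
consists of $\sum_{\ell=0}^{l_{\max}}(n_\ell - 1) = n - (l_{\max}+1)$ kernel vectors; linear independence is immediate since each such vector is the only one with a nonzero entry at its specific non-reference coordinate. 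Combined with Theorem~\ref{Thm:Mult0}, this forces $\text{mul}_T(0) = n-1-l_{\max}$, and by the second sentence of that same theorem $L(T)$ then has exactly one positive eigenvalue.

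For the last assertion, Theorem~\ref{Thm:DiffMult} already yields $|\text{mul}_T(0) - \text{mul}_{T-v}(0)| \leq 1$, so only the value $-1$ must be excluded. Applying the exact formula just proved to both $T$ and $T-v$ (the degenerate case where $T-v$ has only two vertices is checked directly, since $L(P_2)$ has eigenvalues $\pm 1$ and thus $\text{mul}_{T-v}(0)=0$) gives
\[
\text{mul}_T(0) - \text{mul}_{T-v}(0) \;=\; l_{\max}(T-v) - l_{\max}(T) + 1.
\]
Removing a leaf $v$ from $T$ leaves every other level unchanged, so $l_{\max}(T-v)$ either equals $l_{\max}(T)$ or drops to $l_{\max}(T)-1$ (the latter occurring precisely when $v$ is the unique vertex at the maximum level, in which case its parent witnesses the new maximum). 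Hence the difference always lies in $\{0,1\}$.

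The main obstacle is the kernel construction in the first paragraph; once one notices that equal-level vertices induce coincident columns of $L(T)$, the dimension count matches the Cauchy-interlacing upper bound exactly, and the remaining pieces reduce to arithmetic together with a short case analysis on how $l_{\max}$ behaves under leaf deletion.
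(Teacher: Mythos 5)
Your proposal is correct and follows essentially the same route as the paper: the paper bounds the rank of $L(T)$ by $1+l_{\max}$ using the fact that same-level vertices give identical rows (equivalently, your explicit kernel vectors $e_v - e_{w_{l(v)}}$), combines this lower bound on $\text{mul}_T(0)$ with the upper bound and one-positive-eigenvalue conclusion of Theorem~\ref{Thm:Mult0}, and then derives the final claim from the exact formula together with $l_{\max}(T)-l_{\max}(T-v)\in\{0,1\}$ under leaf deletion. Your linear-independence count and the small degenerate check for $|T-v|=2$ are fine and add nothing essentially different.
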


\begin{proof}
Let $\mathcal{S}_j$ represent the set of those vertices of $T$ having level $j$. For every $0\leq j \leq l_{\max}$, the rows of $L(T)$ indexed by the elements of $\mathcal{S}_j$ are all identical. Hence the rank of $L(T)$ is at most
$$n- \sum_{j=0}^{l_{\max}} (|\mathcal{S}_j| -1)=1+l_{\max}\,. $$
With this, $L(T)$ has $0$ as an eigenvalue with multiplicity at least $n-1- l_{\max}$. Invoking Theorem~\ref{Thm:Mult0} finishes the first part of the proof.

\medskip
Let $v$ be a leaf of $T$. If $v$ is not on the maximum level in $T$, or if there are two vertices (including $v$) on the maximum level in $T$, then the deletion of $v$ does not affect the maximum level, i.e. $l_{\max} (T-v)=l_{\max} (T)$. If $v$ is the only vertex lying on the maximum level in $T$, then the deletion of $v$ decreases the maximum level by exactly $1$, i.e. $l_{\max} (T-v)=l_{\max} (T) -1$. Therefore, 
$l_{\max} (T)  - l_{\max} (T-v)\in \{0,1\}$ holds. Now using the first part of the theorem, we obtain
\begin{align*}
\text{mul}_T(0) - \text{mul}_{T-v}(0)&=(n-1 - l_{\max} (T)) - (n-2-l_{\max} (T-v))\\
&=1- (l_{\max} (T)-l_{\max} (T-v))\,,
\end{align*}
which implies that $\text{mul}_T(0) - \text{mul}_{T-v}(0) \in \{0,1\}$.
\end{proof}

As a consequence of Theorem~\ref{Thm:zeroMult} and the starting discussion of Subsection~\ref{Subs:multipli}, we obtain an identity between the level energy and the level spectral.

\begin{corollary}\label{Coro:EnergySpectr}
For a rooted tree $T$, it holds that $E_L (T ) = 2 \rho_L (T )$.
\end{corollary}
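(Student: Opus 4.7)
The plan is to combine Theorem~\ref{Thm:zeroMult} with the elementary bookkeeping observation from the opening of Subsection~\ref{Subs:multipli}. Recall that the trace of $L(T)$ vanishes (all diagonal entries of a level matrix are $0$), so if we split the spectrum into positive, zero, and negative parts, the sum of the positive eigenvalues equals the negative of the sum of the negative eigenvalues. Consequently,
\begin{align*}
E_L(T)=\sum_{i=1}^n |\lambda_i|=2\sum_{\lambda_i>0} \lambda_i\,,
\end{align*}
which is the identity already noted in the text.

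Next I would handle the trivial cases $n\leq 2$ separately (for $n=1$ both sides are $0$, and for $n=2$ the level matrix has eigenvalues $\pm 1$, so $E_L(T)=2=2\rho_L(T)$). For $n>2$, Theorem~\ref{Thm:zeroMult} tells us that $L(T)$ possesses exactly one positive eigenvalue. By the Perron--Frobenius theorem applied earlier to the irreducible nonnegative matrix $L(T)$, the spectral radius $\rho_L(T)$ is itself a (positive) eigenvalue, so this unique positive eigenvalue must coincide with $\rho_L(T)$. Plugging into the displayed identity above yields
\begin{align*}
E_L(T)=2\rho_L(T)\,,
\end{align*}
which is exactly the statement of the corollary.

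There is essentially no obstacle once Theorem~\ref{Thm:zeroMult} is in hand: the corollary is a one-line consequence of that theorem together with the trace-zero observation. The only mild care needed is to make explicit the small-case verification and to note that ``only positive eigenvalue'' together with Perron--Frobenius forces this eigenvalue to be precisely the spectral radius rather than some smaller positive value.
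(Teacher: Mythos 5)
Your proposal is correct and follows essentially the same route as the paper: the identity $E_L(T)=2\sum_{\lambda_i>0}\lambda_i$ from the trace-zero observation at the start of Subsection~\ref{Subs:multipli}, combined with the fact from Theorem~\ref{Thm:zeroMult} that $L(T)$ has exactly one positive eigenvalue, which by Perron--Frobenius must be $\rho_L(T)$. Your explicit treatment of the cases $n\leq 2$ is a small but harmless addition beyond what the paper spells out.
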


\medskip
In particular, we have proved a very recent conjecture from~\cite{Audace}.

\begin{thm}
Among all rooted trees with $n$ vertices, the rooted path $P_n$ uniquely maximises the level energy.
\end{thm}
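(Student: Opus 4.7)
The plan is to derive the result as an immediate consequence of two facts that have already been established. By Corollary~\ref{Coro:EnergySpectr}, every rooted tree $T$ satisfies $E_L(T)=2\rho_L(T)$. Hence the functional $T\mapsto E_L(T)$, restricted to $n$-vertex rooted trees, attains its maximum at exactly the same trees at which $T\mapsto \rho_L(T)$ does, and the uniqueness of the maximiser is preserved.

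Then I would invoke Theorem~\ref{Thm:Pathmaxim}, which asserts that $\rho_L(T)\le\rho_L(P_n)$ for every $n$-vertex rooted tree $T$, with equality if and only if $T=P_n$. Multiplying by $2$ and using the identity above gives
\begin{align*}
E_L(T)=2\rho_L(T)\le 2\rho_L(P_n)=E_L(P_n),
\end{align*}
with equality if and only if $T=P_n$. The cases $n\le 2$ are trivial because only one rooted tree exists on such a vertex set, so no separate argument is required.

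There is no genuine obstacle left at this stage: all of the substance has been absorbed into Theorem~\ref{Thm:zeroMult} and the preliminary remark at the start of Subsection~\ref{Subs:multipli}, where it was shown that $L(T)$ has exactly one positive eigenvalue. It is precisely this structural fact that collapses the study of the level energy to that of the level spectral radius, turning the conjecture into a one-line consequence of Theorem~\ref{Thm:Pathmaxim} rather than requiring any fresh extremal comparison between rooted trees.
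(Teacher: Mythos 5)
Your proposal is correct and is essentially identical to the paper's own proof, which likewise combines Corollary~\ref{Coro:EnergySpectr} (that $E_L(T)=2\rho_L(T)$, resting on the one-positive-eigenvalue fact from Theorem~\ref{Thm:zeroMult}) with the unique maximisation of $\rho_L$ by $P_n$ in Theorem~\ref{Thm:Pathmaxim}. Nothing further is needed.
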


\begin{proof}
Simply combine Theorem~\ref{Thm:Pathmaxim} with Corollary~\ref{Coro:EnergySpectr}.
\end{proof}

\medskip
The following result appears in~\cite{Audace}:
\begin{align}\label{Eq:better}
E_L(T) \leq \sqrt{2n \sum_{1\leq i < j \leq n} l_{ij}^2}
\end{align}
holds for any rooted tree $T$ with $n$ vertices. We provide a new upper bound that is better than~\eqref{Eq:better}, as shown in Theorem~\ref{Tm:BetterThan6} below.

\begin{thm}\label{Tm:BetterThan6}
For a $n$-vertex rooted tree $T$ different from a rooted path, it holds that
\begin{align*}
E_L(T) \leq \sqrt{2(n-1) \sum_{1\leq i < j \leq n} l_{ij}^2}\,.
\end{align*}
\end{thm}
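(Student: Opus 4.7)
The plan is to improve the earlier bound~\eqref{Eq:better} by observing that for a non-path rooted tree $T$, not all $n$ eigenvalues of $L(T)$ can be nonzero, so the Cauchy--Schwarz step that produced the factor $n$ can be sharpened to a factor $n-1$.

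Concretely, I would first invoke Theorem~\ref{Thm:zeroMult} to note that the multiplicity of $0$ as a level eigenvalue of $T$ equals $n-1-l_{\max}(T)$. Since $T$ is not a rooted path, we have $l_{\max}(T)\leq n-2$, so $0$ appears with multiplicity at least $1$, and hence at most $n-1$ of the eigenvalues $\lambda_1,\lambda_2,\ldots,\lambda_n$ of $L(T)$ are nonzero. Let $S=\{i:\lambda_i\ne 0\}$, so $|S|\leq n-1$.

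Next I would apply the Cauchy--Schwarz inequality only to the nonzero eigenvalues:
\begin{align*}
E_L(T)^2 \;=\; \Bigl(\sum_{i\in S}|\lambda_i|\Bigr)^2 \;\leq\; |S|\sum_{i\in S}\lambda_i^2 \;\leq\; (n-1)\sum_{i=1}^n \lambda_i^2.
\end{align*}
By Proposition~\ref{Prop:traceLcarre}, $\sum_{i=1}^n \lambda_i^2 = H(T) = 2\sum_{1\leq i<j\leq n} l_{ij}^2$. Substituting and taking square roots gives
\begin{align*}
E_L(T) \;\leq\; \sqrt{2(n-1)\sum_{1\leq i<j\leq n} l_{ij}^2},
\end{align*}
as desired.

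There is essentially no obstacle: the entire argument is a short deduction from Theorem~\ref{Thm:zeroMult} and Proposition~\ref{Prop:traceLcarre}. The only subtle point is ensuring the hypothesis ``$T$ is not a rooted path'' is used at exactly the right place, namely in guaranteeing $l_{\max}(T)\leq n-2$ so that the multiplicity of $0$ is at least $1$; without this, one only recovers the weaker bound~\eqref{Eq:better}.
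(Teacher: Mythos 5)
Your proof is correct, but it takes a genuinely different route from the paper's. The paper applies the Kober--Zhou inequality
\begin{align*}
n X \leq n\sum_{i=1}^n x_i - \Bigl(\sum_{i=1}^n \sqrt{x_i}\Bigr)^2, \qquad X=\frac{1}{n}\sum_{i=1}^n x_i-\Bigl(\prod_{i=1}^n x_i\Bigr)^{1/n},
\end{align*}
with $x_i=\lambda_i^2$, and then uses the hypothesis that $T$ is not a rooted path only to conclude $\det(L(T))=0$, which kills the geometric-mean term and yields $E_L(T)^2\leq 2n\sum l_{ij}^2-2\sum l_{ij}^2$. You instead extract the same structural fact in eigenvalue form (via Theorem~\ref{Thm:zeroMult}, $0$ has multiplicity $n-1-l_{\max}\geq 1$ since $l_{\max}\leq n-2$ for a non-path, and note this forces $n\geq 3$ so that theorem applies) and then apply Cauchy--Schwarz only over the at most $n-1$ nonzero eigenvalues, together with Proposition~\ref{Prop:traceLcarre}. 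The two arguments hinge on the same key input ($0$ is a level eigenvalue of every non-path rooted tree, equivalently $\det L(T)=0$), and indeed when the product term vanishes the Kober inequality reduces precisely to your restricted Cauchy--Schwarz step, so your version is a more elementary and transparent derivation of the same bound; what the paper's route buys is a formulation that would still give information (a bound involving $|\det(L(T))|^{2/n}$) in cases where the determinant does not vanish, and its second inequality offers a complementary lower-bound direction. You don't actually need the full strength of Theorem~\ref{Thm:zeroMult} (exact multiplicity); the mere fact that $0$ is an eigenvalue suffices, but invoking it is perfectly valid.
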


\begin{proof}

Let $x_1,x_2,\ldots,x_n$ be non-negative numbers and
$$X=\frac{1}{n} \sum_{i=1}^n x_i - \Big(\prod_{i=1}^n x_i \Big)^{1/n}\,.$$ It is shown in~\cite{Kober, Zhou} that
\begin{align*}
nX \leq n \sum_{i=1}^n x_i - \Big(\sum_{i=1}^n \sqrt{x_i}  \Big)^2 \leq n(n-1)X\,.
\end{align*}
Using the first inequality with $x_i=\lambda_i^2$, where $\lambda_1, \lambda_2, \ldots, \lambda_n$ denote all the eigenvalues of $L(T)$, we obtain
\begin{align*}
nX \leq n \sum_{i=1}^n \lambda_i^2 - \Big(\sum_{i=1}^n |\lambda_i|  \Big)^2 \,.
\end{align*}
Equivalently, 
\begin{align*}
&nX \leq 2n \sum_{1\leq i<j \leq n} l_{ij}^2 - E_L(T)^2 \,,\\
&X=\frac{1}{n} \sum_{i=1}^n \lambda_i^2  - \Big(\prod_{i=1}^n \lambda_i^2 \Big)^{1/n} =  \frac{2}{n} \sum_{1\leq i<j \leq n} l_{ij}^2 - |\det(L(T))|^{2/n} 
\end{align*}
by virtue of Proposition~\ref{Prop:traceLcarre}. Since $T$ is different from a rooted path, we have $\det(L(T))=0$. Therefore,
\begin{align*}
E_L(T)^2 \leq 2n \sum_{1\leq i<j \leq n} l_{ij}^2  -n X = 2n \sum_{1\leq i<j \leq n} l_{ij}^2  -2 \sum_{1\leq i<j \leq n} l_{ij}^2 \,.
\end{align*}
This completes the proof of the theorem.
\end{proof}

\medskip
\section{Concluding comments}

Recall that for the distance matrix, the spectrum of any tree contains exactly one positive eigenvalue~\cite[p. 104]{Bapat2}, and this is also the case for the level matrix of a rooted tree as shown in Theorem~\ref{Thm:zeroMult}. The level matrix and the distance matrix coincide only for rooted paths~\cite{Audace} (see also the proof of Lemma~\ref{Lem:landd}). On the other hand, $0$ is always an eigenvalue of $L(T)$ provided $T$ is not a rooted path, while this is not the case for the distance matrix. Since the main aim of spectral graph theory is to reveal the properties of graphs that are characterised by the spectrum of a matrix associated with it, we therefore propose some questions and problems, which we think may be interesting.

\medskip
The problem that characterises graphs having a number of distinct eigenvalues has been paid much attention for some matrices associated with graphs. In our current context, the level spectrum of any rooted tree which is not a rooted path contains at least two distinct nonzero eigenvalues. In Theorem~\ref{Thm:charcONE} we have shown that the rooted star $S_n$ is the only tree having precisely two distinct nonzero level eigenvalues. 

\begin{question}\label{Ques:distinct}
Can we establish explicit conditions on the number of distinct nonzero level eigenvalues for a rooted tree? Can the rooted trees having exactly $k$ distinct level eigenvalues be characterised for any $k>3$?
\end{question}
As a corollary to~\cite[Theorem~2.1]{Liu2015}, we can state the following result with regard to Question~\ref{Ques:distinct}.
\begin{corollary}
Let $T$ be a rooted tree with $n>1$ vertices and $\X$ the unit Perron vector of $L(T)$. For any integer $2\leq k \leq n$, $L(T)$ has $k$ distinct eigenvalues if and only if there exists $k-1$ pairwise distinct real numbers $\alpha_2,\alpha_3,\ldots,\alpha_k$ such that 
\begin{align*}
\rho_L(T) > \max_{2\leq j \leq k}\alpha_j \quad\text{and} \quad 
\prod_{2\leq j \leq k}(L(T)-\alpha_j I_n) = \prod_{2\leq j \leq k}(\rho_L(T)-\alpha_j)\X \X^T\,.
\end{align*}
Moreover,  $\rho_L(T), \alpha_2,\alpha_3,\ldots,\alpha_k$  are precisely the $k$ distinct eigenvalues of $L(T)$.
\end{corollary}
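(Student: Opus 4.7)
The statement is presented explicitly as a corollary of~\cite[Theorem~2.1]{Liu2015}, so the plan is simply to check that the hypotheses of that theorem apply to the level matrix $L(T)$, and then to read off its conclusion verbatim. Liu's theorem characterises, for a real symmetric non-negative irreducible matrix $M$ with Perron eigenvalue $\rho$ and unit Perron eigenvector $\X$, when $M$ has exactly $k$ distinct eigenvalues in terms of the factorisation identity displayed in the statement; the role of $\rho_L(T)$ and $\X$ in the corollary are exactly those of $\rho$ and $\X$ in the theorem.

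All four prerequisites are already available earlier in the paper. First, $L(T)$ is real and symmetric by the very definition $l_{ij}=|l(v_i)-l(v_j)|$; second, its entries are non-negative; third, $L(T)$ is irreducible by the lemma proved in the Level Spectral Radius subsection; fourth, the Perron--Frobenius theorem (recalled earlier) consequently produces a simple largest eigenvalue $\rho_L(T)$ with a unique positive unit eigenvector, which has already been christened $\X$. Feeding these four facts into~\cite[Theorem~2.1]{Liu2015} yields the corollary.

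For completeness I would include a brief spectral-decomposition argument making the equivalence transparent. Writing $L(T)=\sum_{i=1}^{k}\beta_iP_i$ with distinct $\beta_1=\rho_L(T)>\beta_2>\cdots>\beta_k$ and mutually orthogonal projections $P_1=\X\X^T,P_2,\ldots,P_k$ summing to $I_n$, the choice $\alpha_j=\beta_j$ gives
\begin{align*}
\prod_{j=2}^{k}(L(T)-\alpha_jI_n)
=\sum_{i=1}^{k}\Bigl(\prod_{j=2}^{k}(\beta_i-\alpha_j)\Bigr)P_i
=\prod_{j=2}^{k}(\rho_L(T)-\alpha_j)\,\X\X^T,
\end{align*}
since each $i\geq 2$ contributes a vanishing factor. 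Conversely, if the identity holds with pairwise distinct $\alpha_j<\rho_L(T)$, testing it on any eigenvector of $L(T)$ orthogonal to $\X$ with eigenvalue $\beta$ forces $\prod_{j=2}^{k}(\beta-\alpha_j)=0$, so every non-Perron eigenvalue of $L(T)$ lies in $\{\alpha_2,\ldots,\alpha_k\}$; a rank-one comparison of the two sides, combined with the distinctness of the $\alpha_j$'s, then identifies the spectrum with $\{\rho_L(T),\alpha_2,\ldots,\alpha_k\}$.

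The main obstacle in executing this plan is simply to align the precise formulation of~\cite[Theorem~2.1]{Liu2015} with the present statement; in particular, one must confirm that the strict inequality $\rho_L(T)>\max_j\alpha_j$ together with pairwise distinctness is exactly what excludes a multiplicity collision with the Perron eigenvalue and upgrades the count of distinct eigenvalues of $L(T)$ from ``at most $k$'' to ``exactly $k$''.
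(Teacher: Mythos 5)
The paper offers no proof of this corollary beyond the citation of~\cite[Theorem~2.1]{Liu2015}, so your first two paragraphs --- checking that $L(T)$ is real symmetric, non-negative and irreducible, and that Perron--Frobenius supplies the simple eigenvalue $\rho_L(T)$ with unit positive eigenvector $\X$ --- reproduce exactly what the paper does. The forward direction of your supplementary spectral-decomposition argument is also correct: with $\alpha_j=\beta_j$ every projection $P_i$, $i\geq 2$, acquires a vanishing factor and only $\prod_{j\geq 2}(\rho_L(T)-\alpha_j)\X\X^T$ survives.

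The gap is in your converse. Testing the identity on eigenvectors orthogonal to $\X$ shows only that every non-Perron eigenvalue lies in $\{\alpha_2,\dots,\alpha_k\}$, i.e.\ that $L(T)$ has \emph{at most} $k$ distinct eigenvalues; your proposed ``rank-one comparison'' cannot upgrade this to ``exactly $k$,'' because both sides are already equal rank-one matrices whenever the containment is strict --- the identity never detects an $\alpha_j$ that fails to be an eigenvalue. Concretely, $L(S_4)$ has the three distinct eigenvalues $\sqrt{3}>0>-\sqrt{3}$, yet for $k=4$ with $\alpha_2=0$, $\alpha_3=-\sqrt{3}$, $\alpha_4=-1$ (pairwise distinct, all below $\rho_L=\sqrt{3}$) one computes $L(L+\sqrt{3}I)(L+I)=6(\sqrt{3}+1)\X\X^T=(\rho_L-\alpha_2)(\rho_L-\alpha_3)(\rho_L-\alpha_4)\X\X^T$, so the displayed conditions hold even though $L(S_4)$ does not have four distinct eigenvalues and $-1$ is not in its spectrum. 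Thus the ``if'' implication and the ``moreover'' clause fail as stated; they become true only if one additionally forces the $\alpha_j$ to exhaust the non-Perron spectrum (equivalently, takes $k$ minimal, so that $(x-\rho_L(T))\prod_{j\geq 2}(x-\alpha_j)$ is the minimal polynomial of $L(T)$). This defect sits in the corollary as printed rather than being introduced by you, but your plan of ``aligning the formulation with~\cite{Liu2015} and reading off the conclusion'' cannot close it, and the self-contained argument you sketch breaks at precisely this step, so it needs to be flagged rather than treated as routine.
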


\medskip
The inverse problem of which real numbers can be realisable as eigenvalues of the level matrix of a rooted tree can also be studied. Since $L(S_n)$ has only two nonzero eigenvalues given by $\pm \sqrt{n-1}$, we see that the square root of any positive integer is a realisable level eigenvalue. In particular, since $E_L(T) = 2 \rho_L(T)$, we have that every even positive integer can be realised as the level energy of some rooted tree. Moreover, we know that the level eigenvalues (thus, the level energy) are all algebraic integers. It is well-known that if an algebraic integer is a rational number, then it must be an ordinary integer. The square root of any nonnegative integer $m$ is an algebraic integer, but an irrational number unless $m$ is a perfect square.

\begin{problem}\label{Prob:Inverse}
Explore the inverse problem of which real numbers can be realisable as eigenvalues of the level matrix of a rooted tree different from a rooted star. 
\end{problem}
 
\begin{question}
Given any rooted tree $T_1$, can we always construct from $T_1$ a new rooted tree $T_2$ whose level spectrum contains that of $T_1$?
\end{question}

Let $d$ be a positive integer. By a $d$-ary tree, we mean a rooted tree in which every vertex has precisely $d$ children.
\begin{question}
Are complete (full) $d$-ary trees characterised by their level spectra?
\end{question}

The computation of the characteristic polynomial of a tree is quite an old mathematical problem, and there are some known recursive formulae (for as eg. the adjacency and Laplacian matrices) that deal with it~\cite{Jacob,Mohar,Tinhofer}.
 \begin{question}
Can we establish a recursive formula that computes the level characteristic polynomial of any rooted tree?
\end{question}

Further research may include finding extremal trees for the level spectral radius among all rooted trees with a prescribed: (1) outdegree sequence, (2) number of vertices and number of leaves,  (3) number of vertices and maximum outdegree.

We hope to see more work related to the level matrix in the future.

\medskip
\section*{Acknowledgments}

The author thanks Dr.~Bunyamin~\c{S}ahin (Selcuk University, Turkey) for introducing him to the level matrix. This work is dedicated to Professor Stephan Wagner (Uppsala University, Sweden) on the occasion of his birthday.

\medskip

\end{document}